\newtheorem{theorem}{Theorem}
\newtheorem{lemma}{Lemma}[section]
\newtheorem{corollary}{Corollary}
\newtheorem{remark}{Remark}[section]
\newcommand{\ee}{\mathrm{e}} 
\newcommand{\exptime}{\mathbf{e}_p} 
\newcommand{\reals}{\mathbb{R}}
\newcommand{\ind}{\mathds{1}}
\newcommand{\e}{\mathbb{E}}
\newcommand{\p}{\mathbb{P}}
\newcommand{\realsift}{\mathtt c}
\newcommand{\scale}{W}
\newcommand{\qscale}{W^{(q)}}
\newcommand{\pscale}{W^{(p)}}
\newcommand{\pqscale}{W^{(p+q)}}
\newcommand{\zscale}{Z}
\newcommand{\zqscale}{Z^{(q)}}
\newcommand{\zpscale}{Z^{(p)}}
\newcommand{\wapq}{\mathcal W_a^{(p,q)}}
\newcommand{\bP}{\mathbb{P}}
\newcommand{\cH}{\mathcal{H}}
\newcommand{\dR}{\mathbb{R}}
\newcommand{\dE}{\mathbb{E}}
\newcommand{\dP}{\mathbb{P}}
\newcommand{\U}[3]{{U^{(#1)}(#2,\mathrm{d}y;#3)}} 
\newcommand{\Up}[3]{{U_+^{(#1)}(#2,\mathrm{d}y;#3)}} 
\newcommand{\Um}[3]{{U_-^{(#1)}(#2,\mathrm{d}y;#3)}} 
\newcommand{\dy}{\mathrm{d}y}
\newcommand{\dt}{\mathrm{d}t}
\newcommand{\PAR}[1]{{\left(#1\right)}} 
\newcommand{\SBRA}[1]{{\left[#1\right]}} 
\newcommand{\BRA}[1]{{\left\{#1\right\}}} 
\begin{document}

\title[]{Joint distribution of a spectrally negative L\'evy process and its occupation time, with step option pricing in view}

\author[Gu\'erin \& Renaud]{H\'el\`ene Gu\'erin}
\address{IRMAR, Universit\'e de Rennes 1, Campus de Beaulieu
35042 Rennes Cedex France}
\email{helene.guerin@univ-rennes1.fr}

\author[]{Jean-Fran\c{c}ois Renaud}
\address{D\'epartement de math\'ematiques, Universit\'e du Qu\'ebec \`a Montr\'eal (UQAM), 201 av.\ Pr\'esident-Kennedy, Montr\'eal (Qu\'ebec) H2X 3Y7, Canada}
\email{renaud.jf@uqam.ca}

\date{\today}


\begin{abstract}
For a spectrally negative L\'evy process $X$, we study the following distribution:
$$
\e_x \left[ \mathrm{e}^{- q \int_0^t \ind_{(a,b)} (X_s) \mathrm{d}s } ; X_t \in \mathrm{d}y \right] ,
$$
where $-\infty \leq a < b < \infty$, and where $q,t>0$ and $x \in \reals$. More precisely, we identify the Laplace transform with respect to $t$ of this measure in terms of the scale functions of the underlying process. 
Our results are then used to price step options and the particular case of an exponential spectrally negative L\'evy jump-diffusion model is discussed.
\end{abstract}

\maketitle

\textit{Keywords:} Occupations times; Spectrally negative Lévy processes; Fluctuation theory; Scale functions; Step options


\tableofcontents



\section{Introduction}

One of Paul L\'evy's arcsine laws gives the distribution of the occupation time of the positive/negative half-line for a standard Brownian motion. More precisely, if $\BRA{B_t,t\geq 0}$ is a standard Brownian motion, then, for $t>0$,
$$
\p \left( \int_0^t \ind_{(-\infty,0)} (B_u) \leq s \right) = \frac{2}{\pi} \arcsin \left( \sqrt{\frac{s}{t}} \right) \ind_{(0,t)}(s) .
$$
This result was then extended to a Brownian motion with drift by Akahori \cite{akahori1995} and Tak\'acs \cite{takacs1996}.

In the last few years, several papers have looked at the distribution of functionals involving occupation times of a spectrally negative L\'evy process (SNLP), in each case over an infinite time horizon. First, in \cite{landriaultetal2011}, the Laplace transform of the occupation time of semi-infinite intervals for a SNLP has been derived. More precisely, the Laplace transform of
$$
\int_0^{\infty} \ind_{(-\infty,0)} (X_s) \mathrm{d}s \quad \text{and} \quad \int_0^{\tau_{-b}^-} \ind_{(-\infty,0)} (X_s) \mathrm{d}s ,
$$
where $X = \BRA{X_t,t \geq 0}$ is a spectrally negative L\'evy process and where $\tau_{-b}^- = \inf \{t > 0 \colon X_t < -b \}$ with $b>0$, were expressed in terms of the Laplace exponent and the scale functions of the underlying process $X$. Then, in \cite{loeffenetal2014}, those results were significantly extended, first by considering more general quantities, i.e.
$$
\left( \tau_0^- , \int_0^{\tau_0^-} \ind_{(a,b)} (X_s) \mathrm{d}s \right) \quad \text{and} \quad \left( \tau_c^+ , \int_0^{\tau_c^+} \ind_{(a,b)} (X_s) \mathrm{d}s \right) ,
$$
where
$$
\tau_0^- = \inf \{t > 0 \colon X_t < 0 \} \quad \text{and} \quad \tau_c^+ = \inf \{t > 0 \colon X_t > c \} ,
$$
and where $0\leq a \leq b\leq c$, and by obtaining considerably simpler expressions for the joint Laplace transforms. Note that \cite{kyprianouetal2012} and \cite{renaud_2014} have also looked at the abovementioned quantities involving occupation times, but for a so-called refracted L\'evy process, while similar quantities for diffusion processes were studied in \cite{li_zhou_2013} and \cite{lachal_2013}.

In this paper, we are interested in the joint distribution of a spectrally negative L\'evy process and its occupation time when both are sampled at a fixed time. This is closer in spirit to L\'evy's arcsine law and also much more useful for financial applications, especially for the pricing of occupation time options.

\subsection{Occupation time options}\label{sec:intro-options}


Let the risk-neutral price of an asset $S = \{ S_t , t \geq 0\}$ be of the form:
$$
S_t = S_0 \mathrm{e}^{X_t} ,
$$
where $X = \{ X_t , t \geq 0\}$ is the log-return process. For example, in the Black-Scholes-Merton model, $X$ is a Brownian motion with drift. The time spent by $S$ in an interval $I$, or equivalently the time spent by $X$ in an interval $I^\prime$, from time $0$ to time $T$, is given by
$$
 \int_0^T \ind_{\{S_t \in I\}} \mathrm{d}t = \int_0^T \ind_{\{X_t \in I^\prime\}} \mathrm{d}t .
$$

Introduced by Linetsky \cite{linetsky_1999}, (barrier) step options are exotic options linked to occupation times of the underlying asset price process. They are generalized barrier options: instead of being activated (or canceled) when the underlying asset price crosses a barrier, which is a problem from a risk management point of view, the payoff of occupation-time options will depend on the time spent above/below this barrier. Therefore, the change of value occurs more gradually. For instance, a (down-and-out call) step option admits the following payoff:
$$
\mathrm{e}^{- \rho \int_0^T \ind_{\{S_t \leq L\}} \mathrm{d}t } \left( S_T - K \right)_+ = \mathrm{e}^{- \rho \int_0^T \ind_{\{X_t \leq \ln(L/S_0)\}} \mathrm{d}t } \left( S_0 \mathrm{e}^{X_T} - K \right)_+ ,
$$
where $\rho > 0$ is called the \textit{knock-out rate}.
%
Therefore, its price can be written as
\begin{multline*}
C(T) :=\mathrm{e}^{-r T} \e \left[ \mathrm{e}^{- \rho \int_0^T \ind_{\{S_t \leq L\}} \mathrm{d}t } \left( S_T - K \right)_+ \right] \\
= \mathrm{e}^{-rT} \int_{\ln \left( K/S_0 \right)}^\infty \left( S_0 \mathrm{e}^{y} - K \right) \e \left[ \mathrm{e}^{- \rho \int_0^T \ind_\BRA{X_s\leq \ln(L/S_0)} \mathrm{d}s } ; X_T \in \mathrm{d}y \right]
\end{multline*}
with $r$ the risk-free interest rate and its Laplace transform, with respect to the time of maturity $T$, can be written
\begin{multline*}
\int_0^\infty \mathrm{e}^{-pT} C(T) \mathrm{d}T \\
= \int_0^\infty \mathrm{e}^{-(p+r)T} \int_{\ln \left( K/S_0 \right)}^\infty \left( S_0 \mathrm{e}^{y} - K \right) \e \left[ \mathrm{e}^{- \rho \int_0^T \ind_\BRA{X_s\leq \ln(L/S_0)} \mathrm{d}s } ; X_T \in \mathrm{d}y \right] \mathrm{d}T \\
=\int_{\ln \left( K/S_0 \right)}^\infty \left( S_0 \mathrm{e}^{y} - K \right) \int_0^\infty \mathrm{e}^{-(p+r)T} \e \left[ \mathrm{e}^{- \rho \int_0^T \ind_\BRA{X_s\leq \ln(L/S_0)} \mathrm{d}s } ; X_T \in \mathrm{d}y \right] \mathrm{d}T .
\end{multline*}
Thus, pricing step options boils down to identyfing this distribution:
$$
\e \left[ \mathrm{e}^{- \rho \int_0^T \ind_{(-\infty,b)} (X_s) \mathrm{d}s } ; X_T \in \mathrm{d}y \right] ,
$$
for a given value $b \in \mathbb{R}$. Other occupation time options can also be priced using this distribution. For references on occupation-time option pricing, see e.g.\ \cites{cai_chen_wan_2010, hugonnier_1999}.\\

The rest of the paper is organized as follows. In Section 2, we give the necessary background on spectrally negative L\'evy processes and their scale functions. In Section 3, the main results are presented, while their proofs are left for the Appendix. Finally, in section 4, we consider the pricing step options question and a specific example of a jump-diffusion process with hyper-exponential jumps.

\section{Spectrally negative L\'evy processes}

On the filtered probability space $(\Omega, \mathcal{F}, (\mathcal F_t)_{t\geq0},\mathbb P)$, let $X = \BRA{X_t,t \geq 0}$ be a spectrally negative L\'evy process (SNLP), that is a  process with stationary and independent increments and no positive jumps. Hereby we exclude the case of $X$ having monotone paths. As the L\'{e}vy process $X$ has no positive jumps, its Laplace transform exists: for $\lambda,t \geq 0$,
$$
\e \left[ \mathrm{e}^{\lambda X_t} \right] = \mathrm{e}^{t \psi(\lambda)} ,
$$
where
$$
\psi(\lambda) = \gamma \lambda + \frac{1}{2} \sigma^2 \lambda^2 + \int^{\infty}_0 \left( \mathrm{e}^{-\lambda z} - 1 + \lambda z \ind_{(0,1]}(z) \right) \Pi(\mathrm{d}z) ,
$$
for $\gamma \in \reals$ and $\sigma \geq 0$, and where $\Pi$ is a $\sigma$-finite measure on $(0,\infty)$ such that
$$
\int^{\infty}_0 (1 \wedge z^2) \Pi(\mathrm{d}z) < \infty .
$$
This measure is called the L\'{e}vy measure of $X$, while $(\gamma,\sigma,\Pi)$ is refered to as the L\'evy triplet of $X$. Note that for convenience we define the L\'evy measure in such a way that it is a measure on the positive half line instead of the negative half line. Further, note that $\e \left[ X_1 \right] = \psi'(0+)$.

There exists a function $\Phi \colon [0,\infty) \to [0,\infty)$ defined by $\Phi(q) = \sup \{ \lambda \geq 0 \mid \psi(\lambda) = q\}$ (the right-inverse of $\psi$) such that
$$
\psi ( \Phi(q) ) = q, \quad q \geq 0 .
$$
We have that $\Phi(q)=0$ if and only if $q=0$ and $\psi'(0+)\geq0$.

The process $X$ has jumps of bounded variation if $\int^{1}_0 z \Pi(\mathrm{d}z)<\infty$. In that case we denote by
$\realsift:=\gamma+\int^{1}_0 z \Pi(\mathrm{d}z) > 0$ the so-called drift of $X$ which can now be written as
$$
X_t = \realsift t - S_t+\sigma B_t ,
$$
where $S = \BRA{S_t,t \geq 0}$ is a driftless subordinator (for example a Gamma process or a compound Poisson process with positive jumps).

For more details on spectrally negative L\'{e}vy processes, the reader is referred to \cites{kyprianou2006}. 

\subsection{Scale functions and fluctuation identities} \label{sec:scaleF}

We now recall the definition of the $q$-scale function $W^{(q)}$. For $q \geq 0$, the $q$-scale function of the process $X$ is defined as the continuous function on $[0,\infty)$ with Laplace transform
\begin{equation}\label{def_scale}
\int_0^{\infty} \mathrm{e}^{- \lambda y} W^{(q)} (y) \mathrm{d}y = \frac{1}{\psi(\lambda) - q} , \quad \text{for $\lambda > \Phi(q)$.}
\end{equation}
This function is unique, positive and strictly increasing for $x\geq0$ and is further continuous for $q\geq0$. We extend $W^{(q)}$ to the whole real line by setting $W^{(q)}(x)=0$ for $x<0$.  We write $W = W^{(0)}$ when $q=0$. The initial value of $W^{(q)}$ is known to be
\begin{equation*}
W^{(q)}(0)=
\begin{cases}
1/\realsift & \text{when $\sigma=0$ and $\int_{0}^1 z \Pi(\mathrm{d}z) < \infty$},  \\
0 & \text{otherwise},
\end{cases}
\end{equation*}
where we used the following definition: $W^{(q)}(0) = \lim_{x \downarrow 0} W^{(q)}(x)$. We also have that, when $\psi'(0+) > 0$,
$$
\lim_{x \to \infty} W(x) = \frac{1}{\psi'(0+)}.
$$
Finally, we recall the useful relation (see Equation (6) in \cite{loeffenetal2014})
\begin{equation}\label{eq:sym} 
(q-p)\int_0^x\pscale(x-y)\qscale(y)\mathrm{d}y=\qscale(x)-\pscale(x).
\end{equation}

We will also frequently use the function
\begin{equation}\label{eq:zqscale}
Z^{(q)}(x) = 1 + q \int_0^x{W}^{(q)}(y)\mathrm dy, \quad x\in\mathbb R.
\end{equation}

Let us introduce some notations. The law of $X$  such that $X_0 = x$ is denoted by $\p_x$ and the corresponding expectation by $\e_x$. We write $\p$ and $\e$ when $x=0$. Finally, for a random variable $Z$ and an event $A$, $\e [Z;A] := \e [Z \ind_A]$.

Now, for any $a,b \in \reals$, define the stopping times
$$
\tau_a^- = \inf \{t > 0 \colon X_t < a \} \quad \text{and} \quad \tau_b^+ = \inf \{t > 0 \colon X_t > b \} ,
$$
with the convention $\inf\emptyset=\infty$.

It is well known that, if $a \leq x \leq c$, then the solution to the two-sided exit problem for $X$ is given by
\begin{equation}\label{E:exitabove}
\e_x \left[ \ee^{-q \tau_c^+} ; \tau_c^+ < \tau_a^- \right] = \frac{W^{(q)}(x-a)}{W^{(q)}(c-a)} ,
\end{equation}
\begin{equation}\label{E:exitbelow}
\e_x \left[ \ee^{-q \tau_a^-} ; \tau_a^- < \tau_c^+ \right] = Z^{(q)}(x-a) - \frac{Z^{(q)}(c-a)}{W^{(q)}(c-a)} W^{(q)}(x-a) .
\end{equation}

It is well known (see e.g.\ \cite{kyprianou2006}) that under the change of measure given by
$$
\left. \frac{d\mathbb{P}_{x}^{c}}{d\mathbb{P}_{x}} \right|_{\mathcal{F}_{t}} = \exp \left\lbrace c (X_t-x) -\psi(c) t \right\rbrace ,
$$
for $c$ such that $\mathbb{E}_x \left[ \mathrm{e}^{c X_1} \right] < \infty$, $X$ is a spectrally negative L\'evy process with Laplace exponent (under $\mathbb{P}_{x}^c$)
\begin{equation}\label{eq:psic}
\psi_c (\theta) = \psi (\theta+c) - \psi (c) ,
\end{equation}
for $\theta \geq - c$. Its right-inverse function is then given by
$$
\Phi_c (q) = \Phi(q+\psi(c)) - c .
$$
We write $W^{(q)}_c$ and $Z^{(q)}_c$ for the corresponding scale functions. Note that Lemma 8.4 in \cite{kyprianou2006} states that, for $x\geq 0$,
$$
\qscale (x) = \mathrm{e}^{c x} W^{(q - \psi(c))}_c (x) .
$$
A direct application then gives us, for $p,q\geq 0$,
\begin{equation}\label{E:scale_measure_change}
\pqscale(x) = \mathrm{e}^{\Phi(p)x}W_{\Phi(p)}^\PAR{q}(x) .
\end{equation}
Note also that in this case, $\psi'_{\Phi(p)} (0+) = \psi' \left( \Phi(p) \right) > 0$ and $\Phi_{\Phi(p)} (0) = 0$ if $p>0$.

For examples and numerical techniques related to the computation of scale functions, we suggest to look at \cite{kuznetsovetal2012}.

\subsection{The $p$-potential measure}

For $x\in[0,a]$ with $a>0$, 
$$
\U{p}{x}{a}=\int_0^{+\infty} \mathrm{e}^{-pt}\bP_x\PAR{X_t\in \dy ;t<\tau_0^-\wedge \tau_a^+}\dt
$$ 
is the $p$-potential measure of $X$ killed on exiting $[0,a]$ (see \cite[Theorem 8.7]{kyprianou2006} ) and it has a density supported on $[0,a]$ given by
\begin{equation}\label{eq:U}
\U{p}{x}{a}=\BRA{{\pscale(x)\pscale(a-y)\over \pscale(a)}-\pscale(x-y)}\dy . 
\end{equation}

\bigskip 
For $a\in \reals$ and $x\leq a$, we denote by 
\[
\Up{p}{x}{a}=\int_0^{+\infty} \mathrm{e}^{-pt}\bP_x\PAR{X_t\in \dy ;t< \tau_a^+}\dt ,
\]
the $p$-potential measure of $X$ killed on exiting $(-\infty,a]$.
We notice that for $m>0$ large enough
\begin{multline*}
\int_0^{+\infty} \mathrm{e}^{-pt}\bP_{x}\PAR{X_t\in \dy ;t<\tau_{-m}^-\wedge \tau_a^+}\dt \\
=\int_0^{+\infty} \mathrm{e}^{-pt}\bP_{x+m}\PAR{X_t\in \dy +m;t<\tau_0^-\wedge \tau_{a+m}^+}\dt \\
=\BRA{{\pscale(x+m)\pscale(a-y)\over \pscale(a+m)}-\pscale(x-y)}\dy .
\end{multline*}
From 
${\pscale(x+m)\over \pscale(a+m)} = \mathrm{e}^{\Phi(p)(x-a)}{W_{\Phi(p)}(x+m)\over W_{\Phi(p)}(a+m)},$
taking $m\rightarrow +\infty$ in the last equality, we obtain, for $x,y\leq a$,
\begin{equation}\label{eq:Up}
\Up{p}{x}{a}=\BRA{ \mathrm{e}^{\Phi(p)(x-a)}\pscale(a-y)-\pscale(x-y)}\dy .
\end{equation}

\bigskip
For $a\in\reals$ and $x\geq a$, we now introduce 
\[
\Um{p}{x}{a}=\int_0^{+\infty} \mathrm{e}^{-pt}\bP_x\PAR{X_t\in \dy ;t< \tau_a^-}\dt ,
\]
the $p$-potential measure of $X$ killed on exiting $[a,+\infty)$.
From \cite[Corollary 8.8]{kyprianou2006},  we know that
$$\Um{p}{x}{0}=\BRA{ \mathrm{e}^{-\Phi(p)y}\pscale(x)-\pscale(x-y)}\dy .$$
Then we deduce that, for $x,y\geq a$,
\begin{multline*}
\Um{p}{x}{a} = \int_0^{+\infty} \mathrm{e}^{-pt}\bP_{x-a}\PAR{X_t\in \dy -a;t< \tau_0^-}\dt \\
=\BRA{ \mathrm{e}^{-\Phi(p)(y-a)}\pscale(x-a)-\pscale(x-y)}\dy .
\end{multline*}
We extend the definition of $\Um{p}{x}{a}$ for $x\in\dR$ by
\begin{equation}\label{eq:Um}
\Um{p}{x}{a}
=\BRA{ \mathrm{e}^{-\Phi(p)(y-a)}\pscale(x-a)-\pscale(x-y)}\ind_{x,y\geq a}\mathrm{d}y.
\end{equation}

\section{Main results}

We are interested in the following distribution: for fixed $q,t>0$, and for all $x \in \reals$,
\begin{equation}\label{E:qty_of_interest}
\e_x \left[ \mathrm{e}^{- q \int_0^t \ind_{(0,a)} (X_s) \mathrm{d}s } ; X_t \in \mathrm{d}y \right] .
\end{equation}
We will consider the Laplace transform, with respect to $t$, of the expectation in~\eqref{E:qty_of_interest}: for all $x \in \reals$, set for $p>0$
\begin{align*}
v(x,\mathrm{d}y ) &:= \int_0^\infty  \mathrm{e}^{-p t} \e_x \left[ \mathrm{e}^{- q \int_0^{t} \ind_{(0,a)} (X_s) \mathrm{d}s } ; X_t \in \mathrm{d}y \right] \mathrm{d}t .
\end{align*}
We notice that it can be written
\begin{equation}\label{eq:defv}
v(x,\mathrm{d}y )={1\over p} \e_x \left[ \mathrm{e}^{- q \int_0^{\exptime} \ind_{(0,a)} (X_s) \mathrm{d}s } ; X_{\exptime} \in \mathrm{d}y \right] ,
\end{equation}
where $\exptime$ is an exponentially distributed random variable (independent of $X$) with mean $1/p$.

\begin{remark}
In what follows, for sake of simplicity, we will omit indicator functions of the form $\ind_{\tau < \infty}$ in our expectations, where $\tau$ is a first-passage stopping time when there is no confusion. For example, we will write $\e_x \left[ \ee^{-q \tau_a^-} \right]$ instead of $\e_x \left[ \ee^{-q \tau_a^-} ; \tau_a^- < \infty \right]$.
\end{remark}

\subsection{A probabilistic decomposition}

Recall that if we assume $X$ has paths of bounded variation (BV), then, for any $b\in\dR$, we have $X_{\tau_b^-}<b$ almost surely; in other words, $X$ does not creep downward. Also, in that case, $\pscale(0) > 0$. The following proof of lemma is based on representation~\eqref{eq:defv} of $v(.,\mathrm{d}y)$. We discuss according to the position of the hitting times of $(0,a)$ with respect to the exponential time $\exptime$.
\begin{lemma} \label{lem:vgeneral}
Assume $a>0$. For a spectrally negative L\'evy process  $X$ with paths of BV, $v(.,\mathrm{d}y )$ satisfies, for all $x\in\dR$,
\begin{multline*}
v(x,\mathrm{d}y )=\Um{p}{x}{a}\ind_{y>a}+\dE_x\SBRA{ \mathrm{e}^{-p\tau_a^-}\U{p+q}{X_{\tau_a^-}}{a}}\ind_{y\in[0,a]}\\
+{v(a,\mathrm{d}y )\over \pqscale(a)}\dE_x\SBRA{ \mathrm{e}^{-p\tau_a^-}\pqscale(X_{\tau_a^-})}+v(0,\mathrm{d}y )\dE_x\SBRA{ \mathrm{e}^{-p\tau_a^-}\dE_{X_{\tau_a^-}}\SBRA{ \mathrm{e}^{-(p+q)\tau_0^-+\Phi(p)X_{\tau_0^-}};\tau_0^-< \tau_a^+}} \\
+\dE_x\SBRA{ \mathrm{e}^{-p\tau_a^-}\dE_{X_{\tau_a^-}}\SBRA{ \mathrm{e}^{-(p+q)\tau_0^-}\Up{p}{X_{\tau_0^-}}{0};\tau_0^-< \tau_a^+}}\ind_{y<0}.
\end{multline*}
\end{lemma}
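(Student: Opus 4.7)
The plan is to use the representation~\eqref{eq:defv} in terms of an independent exponential clock $\exptime$ and then peel off the trajectory of $X$ by successive applications of the strong Markov property combined with the memoryless property of $\exptime$. The key observation driving the decomposition is that while $X$ is outside $[0,a]$ no occupation time accrues, so the integrand stays equal to $1$ on those excursions.

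First, for $x\geq a$, I would split the expectation defining $p\,v(x,\mathrm dy)$ on the events $\{\exptime<\tau_a^-\}$ and $\{\tau_a^-\leq \exptime\}$. On the first event the integral in the exponent is zero and $X_{\exptime}>a$, producing $p\,\Um{p}{x}{a}\ind_{y>a}$. On the second, the strong Markov property at $\tau_a^-$ and the memoryless property of $\exptime$ give $p\,\dE_x[\mathrm e^{-p\tau_a^-}v(X_{\tau_a^-},\mathrm dy)]$. Since $X$ has BV paths and does not creep downward, $X_{\tau_a^-}<a$ almost surely, and it may land either in $[0,a)$ or in $(-\infty,0)$; the scale-function conventions $\pqscale(x)=0$ and $\U{p+q}{x}{a}=0$ for $x<0$ will let me handle both cases uniformly. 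The same identity then extends to $x<a$ because $\tau_a^-=0$ and $X_{\tau_a^-}=x$ in that case.

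Second, I would compute $v(z,\mathrm dy)$ for $z\in[0,a)$ by splitting on which of $\exptime$, $\tau_0^-$, $\tau_a^+$ comes first. The event $\{\exptime<\tau_0^-\wedge\tau_a^+\}$ contributes $\U{p+q}{z}{a}\ind_{y\in[0,a]}$, because the factor $\mathrm e^{-q\exptime}$ turns the clock of rate $p$ into one of rate $p+q$. The event $\{\tau_a^+\leq \exptime,\,\tau_a^+<\tau_0^-\}$ contributes $\frac{\pqscale(z)}{\pqscale(a)}v(a,\mathrm dy)$ via the two-sided exit identity~\eqref{E:exitabove} and the fact that $X_{\tau_a^+}=a$ (no positive jumps). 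The event $\{\tau_0^-\leq\exptime,\,\tau_0^-<\tau_a^+\}$ contributes $\dE_z[\mathrm e^{-(p+q)\tau_0^-}v(X_{\tau_0^-},\mathrm dy);\tau_0^-<\tau_a^+]$. Similarly, for $w<0$, splitting on $\tau_0^+$ versus $\exptime$ and using $X_{\tau_0^+}=0$ together with $\dE_w[\mathrm e^{-p\tau_0^+}]=\mathrm e^{\Phi(p)w}$ yields $v(w,\mathrm dy)=\Up{p}{w}{0}\ind_{y<0}+\mathrm e^{\Phi(p)w}v(0,\mathrm dy)$.

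Finally, I would substitute the expression for $v(w,\mathrm dy)$ with $w<0$ into the $\tau_0^-$ term (legitimate since $X_{\tau_0^-}<0$ on $\{\tau_0^-<\tau_a^+\}$ by BV), then substitute the resulting expression for $v(z,\mathrm dy)$ into the first decomposition, and divide by $p$. The five terms that appear match the lemma's statement exactly. The main obstacle is the bookkeeping needed to ensure a single formula works for every $x\in\dR$: one has to check that the scale-function conventions on $(-\infty,0)$ force the terms involving $\U{p+q}{X_{\tau_a^-}}{a}$ and $\pqscale(X_{\tau_a^-})$ to vanish when $X_{\tau_a^-}<0$, while the nested expectations $\dE_{X_{\tau_a^-}}[\,\cdot\,;\tau_0^-<\tau_a^+]$ correctly collapse using $\tau_0^-=0$ and $X_{\tau_0^-}=X_{\tau_a^-}$ in that same regime.
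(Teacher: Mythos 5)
Your proposal is correct and follows essentially the same route as the paper: the same three-regime decomposition via the independent exponential clock and the strong Markov property (at $\tau_a^-$, then at $\tau_0^-\wedge\tau_a^+$, then at $\tau_0^+$), the same use of the BV/no-creeping property to justify substituting the sub-zero formula, and the same final observation that the scale-function conventions make one formula valid for all $x\in\dR$. The only difference is cosmetic ordering (you work top-down from $x\geq a$; the paper works bottom-up from $x<0$).
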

\begin{proof}We use representation \eqref{eq:defv} of $v(.,\dy)$.
For $x<0$, we notice that
\begin{align}
 v(x,\mathrm{d}y )&={1\over p}\bP_x\PAR{X_{\exptime}\in \mathrm{d}y;\exptime<\tau_0^+ }+v(0,\mathrm{d}y )\bP_x(\tau_0^+<\exptime)\notag\\
&=\Up{p}{x}{0}\ind_{y<0}+ \mathrm{e}^{\Phi(p)x}v(0,\mathrm{d}y ).\label{eq:vxneg}
\end{align}
Let us now consider $0\leq x<a$ and we have
\begin{align*}
v(x,\mathrm{d}y )&={1\over p}\dE_x\SBRA{ \mathrm{e}^{-q\exptime};X_{\exptime}\in \mathrm{d}y ;\exptime<\tau_0^-\wedge \tau_a^+}+v(a,\mathrm{d}y )\dE_x\SBRA{ \mathrm{e}^{-q\tau_a^+}; \tau_a^+<\tau_0^-\wedge \exptime }\\
&\hskip 1cm +\dE_x\SBRA{ \mathrm{e}^{-q\tau_0^-}v(X_{\tau_0^-},\mathrm{d}y );\tau_0^-<\exptime \wedge \tau_a^+}\\
&=\U{p+q}{x}{a}\ind_{y\in[0,a]}+v(a,\mathrm{d}y )\dE_x\SBRA{ \mathrm{e}^{-(p+q)\tau_a^+};\tau_a^+<\tau_0^-}\\
&\hskip 1 cm +\dE_x\SBRA{ \mathrm{e}^{-(p+q)\tau_0^-}v(X_{\tau_0^-},\mathrm{d}y );\tau_0^-< \tau_a^+}\\
&=\U{p+q}{x}{a}\ind_{y\in[0,a]}+v(a,\mathrm{d}y ){\pqscale(x)\over\pqscale(a)}
+\dE_x\SBRA{ \mathrm{e}^{-(p+q)\tau_0^-}v(X_{\tau_0^-},\mathrm{d}y );\tau_0^-< \tau_a^+}
\end{align*}
Since $X$ is of bounded variation, $X_{\tau_0^-}<0$ a.s.. Consequently, using \eqref{eq:vxneg}, we notice that for any $x<a$,
\begin{multline}\label{eq:va}
v(x,\mathrm{d}y )=\U{p+q}{x}{a}\ind_{y\in[0,a]}+v(a,\mathrm{d}y ){\pqscale(x)\over\pqscale(a)}
\\
+v(0,\mathrm{d}y )\dE_x\SBRA{ \mathrm{e}^{-(p+q)\tau_0^-+\Phi(p)X_{\tau_0^-}};\tau_0^-< \tau_a^+} +\dE_x\SBRA{ \mathrm{e}^{-(p+q)\tau_0^-}\Up{p}{X_{\tau_0^-}}{0};\tau_0^-< \tau_a^+}\ind_{y<0}.
\end{multline}
We now study the last case: let $x\geq a$,
\begin{align*}
v(x,\mathrm{d}y )&={1\over p}\bP_x\PAR{X_{\exptime }\in \mathrm{d}y;\exptime <\tau_a^- } + \dE_x\SBRA{v(X_{\tau_a^-},\mathrm{d}y );\tau_a^-<\exptime }\\
&=\Um{p}{x}{a}\ind_{y>a}+\dE_x\SBRA{ \mathrm{e}^{-p\tau_a^-}v(X_{\tau_a^-},\mathrm{d}y )}.
\end{align*}
Since $X_{\tau_a^-}<a$ a.s., we deduce from \eqref{eq:va} that for $x\geq a$
\begin{multline*}
v(x,\mathrm{d}y )=\Um{p}{x}{a}\ind_{y>a}+\dE_x\SBRA{ \mathrm{e}^{-p\tau_a^-}\U{p+q}{X_{\tau_a^-}}{a}}\ind_{y\in[0,a]}\\
+{v(a,\mathrm{d}y )\over \pqscale(a)}\dE_x\SBRA{ \mathrm{e}^{-p\tau_a^-}\pqscale(X_{\tau_a^-})} \\
+v(0,\mathrm{d}y )\dE_x\SBRA{ \mathrm{e}^{-p\tau_a^-}\dE_{X_{\tau_a^-}}\SBRA{ \mathrm{e}^{-(p+q)\tau_0^-+\Phi(p)X_{\tau_0^-}};\tau_0^-< \tau_a^+}}\\
+\dE_x\SBRA{ \mathrm{e}^{-p\tau_a^-}\dE_{X_{\tau_a^-}}\SBRA{ \mathrm{e}^{-(p+q)\tau_0^-}\Up{p}{X_{\tau_0^-}}{0};\tau_0^-< \tau_a^+}}\ind_{y<0}.
\end{multline*}
To conclude the proof, we just notice that this last expression is satisfied for any $x\in\dR$.
\end{proof}

%

\subsection{Joint distributions of occupations times and the process sampled at a fixed time}

As in~\cite{loeffenetal2014}, for the sake of compactness of the next result, we introduce the following functions. First, for $p, p+q \geq 0$ and $x \in \mathbb R$, set
\begin{multline}\label{E:wronde}
\wapq(x) =  W^{(p+q)}(x) - q \int_0^a W^{(p+q)}(x-z) \pscale(z) \mathrm{d}z \\
= W^{(p)}(x) + q \int_a^x W^{(p+q)}(x-z)W^{(p)}(z)\mathrm d z ,
\end{multline}
the second equality following from \eqref{eq:sym}. We notice that $ \wapq(x)=\pscale(x)$ for $x\leq a$. 

Secondly, for $p\geq 0$, $q\in\dR$ with $p+q\geq 0$ and $x\in\dR$, set
\begin{equation}\label{eq:H}
\mathcal{H}^\PAR{p,q}(x) = \mathrm{e}^{\Phi(p) x} \left[ 1 + q \int_0^x \mathrm{e}^{-\Phi(p)z} W^{(p+q)}(z) \mathrm{d}z \right] .
\end{equation}
From Equation~\eqref{E:scale_measure_change}, one easily gets that
\begin{equation}\label{eq:HZ}
\mathcal{H}^\PAR{p,q}(x) = \mathrm{e}^{\Phi(p) x} Z^{(q)}_{\Phi(p)} (x) .
\end{equation}
Finally, note that the Laplace transform of $\mathcal{H}^\PAR{p,q}$ on $[0,\infty)$ is explicitly given by
\begin{equation}\label{eq:LTH}
\int_0^{\infty} \mathrm{e}^{-\lambda x}\mathcal{H}^\PAR{p,q}(x) \mathrm{d}x={1\over \lambda-\Phi(p)}\PAR{1+{q\over \psi(\lambda)-p-q}} \, ,\text{ for }\lambda>\Phi(p+q) .
\end{equation}

Here is our main result.
\begin{theorem}\label{main}
Fix $a>0$, $q \geq 0$ and $x\in\dR$. 
For $p > 0$, $y\in\dR$,
\begin{multline*}
\int_0^\infty \mathrm{e}^{-p t} \e_x \left[ \mathrm{e}^{- q \int_0^{t} \ind_{(0,a)} (X_s) \mathrm{d}s } ; X_t \in \mathrm{d}y \right] \mathrm{d}t\\
\hskip -3cm =\mathrm{e}^{-\Phi(p)a}\left( \frac{\mathcal{H}^\PAR{p,q}(x) - q \int_a^x \pscale(x-z) \mathcal{H}^\PAR{p,q}(z) \mathrm{d}z}{\psi'(\Phi(p)) + q \int_0^a \mathrm{e}^{-\Phi(p) z} \mathcal{H}^\PAR{p,q}(z) \mathrm{d}z} \right)\\
\times \left\lbrace \cH^\PAR{p,q}(a-y) - q \int_0^{-y} \cH^\PAR{p,q}(a-y-z) \pscale(z) \mathrm{d}z \right\rbrace \dy \\
- \left\{ \mathcal W_{x-a}^{(p,q)} (x-y) -q \int_0^{-y} \mathcal W_{x-a}^{(p,q)} (x-y-z) \pscale(z) \mathrm{d}z \right\} \dy .
\end{multline*}
\end{theorem}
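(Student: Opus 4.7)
The plan is to start from the probabilistic decomposition of Lemma~\ref{lem:vgeneral}, which already expresses $v(x,\mathrm{d}y)$ in terms of only two unknown measures, $v(0,\mathrm{d}y)$ and $v(a,\mathrm{d}y)$, together with several expectations involving the first-passage times $\tau_a^-$ and $\tau_0^-$. I will proceed by (i) evaluating each of these expectations in closed form, (ii) solving a small linear system for the two unknowns, (iii) reassembling the decomposition, and (iv) removing the bounded-variation assumption.

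Each expectation appearing in Lemma~\ref{lem:vgeneral} is a standard Gerber--Shiu type quantity for a SNLP. The terms of the form $\mathbb{E}_x[\mathrm{e}^{-p\tau_a^-}g(X_{\tau_a^-})]$ with $x\geq a$ reduce, via the strong Markov property and a space shift by $a$, to expectations of $g$ against the joint law of the undershoot at the first downward passage of the level $0$; this law is described classically in terms of the potential density~\eqref{eq:Um} and the L\'evy measure $\Pi$. The inner expectations $\mathbb{E}_z\bigl[\mathrm{e}^{-(p+q)\tau_0^-+\Phi(p)X_{\tau_0^-}};\,\tau_0^-<\tau_a^+\bigr]$ for $z\in[0,a]$ are evaluated through the Esscher transform~\eqref{eq:psic}--\eqref{E:scale_measure_change}: after tilting, this becomes a killed exit-below expectation for the tilted process, which is handled by~\eqref{E:exitbelow}. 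This is where $\mathcal{H}^{(p,q)}$ enters, via its representation~\eqref{eq:HZ}. The same tilt treats the remaining double expectation involving the density~\eqref{eq:Up}.

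Once every such quantity is written explicitly in terms of scale functions, I evaluate the resulting identity at $x=0$ (using~\eqref{eq:va}) and at $x=a$ to obtain a $2\times 2$ linear system for $v(0,\mathrm{d}y)$ and $v(a,\mathrm{d}y)$. Its determinant will be proportional to the denominator $\psi'(\Phi(p))+q\int_0^a\mathrm{e}^{-\Phi(p)z}\mathcal{H}^{(p,q)}(z)\,\mathrm{d}z$ of the theorem, as one checks using the Laplace transform~\eqref{eq:LTH} of $\mathcal{H}^{(p,q)}$ together with the asymptotic $W^{(p)}(x)\mathrm{e}^{-\Phi(p)x}\to 1/\psi'(\Phi(p))$. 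Substituting the resulting explicit expressions for $v(0,\mathrm{d}y)$ and $v(a,\mathrm{d}y)$ back into Lemma~\ref{lem:vgeneral} and reorganising yields the stated formula, the function $\mathcal{W}_a^{(p,q)}$ of~\eqref{E:wronde} emerging once one applies the symmetry identity~\eqref{eq:sym} to combine the various convolutions that appear.

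I expect the main obstacle to be the algebraic bookkeeping in this last step: recognising $\mathcal{W}_a^{(p,q)}$ and the ratio involving $\mathcal{H}^{(p,q)}$ in the raw sum of terms relies on careful use of~\eqref{eq:sym} and~\eqref{E:scale_measure_change}, while everything else reduces to patient application of standard fluctuation identities. Finally, to remove the BV restriction inherited from Lemma~\ref{lem:vgeneral}, I would approximate a general SNLP by a sequence of BV SNLPs whose L\'evy triplets converge to its triplet; both sides of the theorem depend continuously on the triplet through the scale functions, so the formula extends to the general case by passage to the limit.
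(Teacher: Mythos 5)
Your plan coincides with the paper's own proof in all essentials: starting from Lemma~\ref{lem:vgeneral}, evaluating the Gerber--Shiu type expectations via the Esscher transform and the identities of Section~\ref{section:tech}, solving the $2\times 2$ system for $v(0,\mathrm{d}y)$ and $v(a,\mathrm{d}y)$ whose determinant is exactly $\pscale(0)\mathrm{e}^{\Phi(p)a}\bigl(\psi'(\Phi(p))+q\int_0^a\mathrm{e}^{-\Phi(p)z}\mathcal{H}^{(p,q)}(z)\,\mathrm{d}z\bigr)$, reassembling with~\eqref{eq:sym}, and finally passing from BV to UBV by the approximation of Bertoin. The only cosmetic difference is that you would express the overshoot expectations through the compensation formula and the L\'evy measure, whereas the paper quotes them directly from Lemma~2.2 of \cite{loeffenetal2014}; this does not change the argument.
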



We extend the result of Theorem~\ref{main} to any finite and then semi-infinite interval, i.e.\ we study the Laplace transform of 
\begin{equation*}
\e_x \left[ \mathrm{e}^{- q \int_0^t \ind_{(a,b)} (X_s) \mathrm{d}s } ; X_t \in \mathrm{d}y \right] ,
\end{equation*}
where $t>0$.

We could re-apply the same methodology (in a quite shorter form) as in the proof of Theorem~\ref{main} for proving the second part of the next result, but we will instead take limits in our main result.
\begin{corollary}\label{main_prop}
Fix $a,b\in\dR$ with $a<b$. For all $x\in\dR$ and $q \geq 0$, we have that, for $p > 0$,
\begin{multline*}
\int_0^\infty \mathrm{e}^{-p t} \e_x \left[ \mathrm{e}^{- q \int_0^{t} \ind_{(a,b)} (X_s) \mathrm{d}s } ; X_t \in \mathrm{d}y \right] \mathrm{d}t \\
\hskip -2cm=
\mathrm{e}^{-\Phi(p)(b-a)} \left( \frac{\mathcal{H}^\PAR{p,q}(x-a) - q \int_{b}^{x} \pscale(x-z) \mathcal{H}^\PAR{p,q}(z-a) \mathrm{d}z}{\psi'(\Phi(p)) + q \int_0^{b-a} \mathrm{e}^{-\Phi(p) z} \mathcal{H}^\PAR{p,q}(z) \mathrm{d}z} \right) \\
\times \left\lbrace \cH^\PAR{p,q}(b-y) - q \int_0^{-y+a} \cH^\PAR{p,q}(b-y-z) \pscale(z) \mathrm{d}z \right\rbrace \dy \\
- \left\{ \mathcal W_{x-b}^{(p,q)} (x-y) -q \int_0^{-y+a} \mathcal W_{x-b}^{(p,q)} (x-y-z) \pscale(z) \mathrm{d}z \right\} \dy,
\end{multline*}
and
\begin{multline}\label{E:semi-infinite}
\int_0^\infty \mathrm{e}^{-p t} \e_x \left[ \mathrm{e}^{- q \int_0^{t} \ind_{(-\infty,b)} (X_s) \mathrm{d}s } ; X_t \in \mathrm{d}y \right] \mathrm{d}t \\
=\BRA{\left( \frac{\Phi(p+q)-\Phi(p)}{q} \right) \cH^\PAR{p+q,-q}(x-b) \cH^\PAR{p,q}(b-y) - \mathcal W_{x-b}^{(p,q)} (x-y)} \dy ,
\end{multline}
where $\lim_{q \to 0} \left( \Phi(p+q)-\Phi(p) \right)/q = 1/\psi' \left( \Phi(p) \right)$ in the case $q=0$.
\end{corollary}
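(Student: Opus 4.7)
The plan is to deduce both formulas from Theorem~\ref{main}. The finite-interval case reduces to Theorem~\ref{main} by spatial homogeneity, and the semi-infinite case is obtained by letting $a \to -\infty$ in the finite-interval formula.

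\textbf{Part 1 (the interval $(a,b)$).} The shifted process $\tilde X_s := X_s - a$ is a spectrally negative L\'evy process with the same law as $X$ but started from $x-a$ under $\p_x$, and we have the pointwise identities
$$
\int_0^t \ind_{(a,b)}(X_s)\,\mathrm{d}s = \int_0^t \ind_{(0,b-a)}(\tilde X_s)\,\mathrm{d}s, \qquad \{X_t \in \mathrm{d}y\} = \{\tilde X_t \in \mathrm{d}(y-a)\}.
$$
Applying Theorem~\ref{main} to $\tilde X$ with interval length $b-a>0$, starting point $x-a$, and spatial variable $y-a$, then performing the substitution $z \mapsto z+a$ in the inner integral against $\mathcal H^{(p,q)}$ that appears in the numerator of the first factor, yields exactly the claimed expression (with $\mathcal W^{(p,q)}_{x-b}$ coming from the first subscript $(x-a)-(b-a) = x-b$).

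\textbf{Part 2 (the semi-infinite interval $(-\infty,b)$).} In the formula just obtained, let $a \to -\infty$. For any fixed $y$ one eventually has $-y+a<0$, so the two integrals with upper limit $-y+a$ drop out and the explicit $\mathrm{d}y$-terms collapse to $\mathcal H^{(p,q)}(b-y)$ and $\mathcal W^{(p,q)}_{x-b}(x-y)$. It remains to evaluate the limit of the scalar prefactor
$$
\ee^{-\Phi(p)(b-a)} \, \frac{\mathcal H^{(p,q)}(x-a) - q\int_b^x W^{(p)}(x-z)\,\mathcal H^{(p,q)}(z-a)\,\mathrm{d}z}{\psi'(\Phi(p)) + q\int_0^{b-a}\ee^{-\Phi(p)z}\mathcal H^{(p,q)}(z)\,\mathrm{d}z}.
$$
Using \eqref{eq:HZ} together with the large-argument asymptotic
$$
W^{(q)}_{\Phi(p)}(u) \sim \frac{\ee^{(\Phi(p+q)-\Phi(p))u}}{\psi'(\Phi(p+q))},
$$
valid because $\psi'_{\Phi(p)}(0+) = \psi'(\Phi(p)) > 0$ and $\Phi_{\Phi(p)}(q) = \Phi(p+q)-\Phi(p)$, I would show that numerator and denominator both blow up exponentially in $|a|$ and that the leading factor $\ee^{-\Phi(p)(b-a)}$ cancels the $a$-dependence of the ratio. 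After a short computation the prefactor tends to
$$
\frac{\Phi(p+q)-\Phi(p)}{q}\left[\ee^{\Phi(p+q)(x-b)} - q\int_b^x W^{(p)}(x-z)\,\ee^{\Phi(p+q)(z-b)}\,\mathrm{d}z\right].
$$
A final change of variable $z \mapsto x+b-z$ inside the integral identifies the bracket with the defining expression \eqref{eq:H} of $\mathcal H^{(p+q,-q)}(x-b)$, producing the prefactor announced in~\eqref{E:semi-infinite}.

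\textbf{Main obstacle.} The delicate step is the asymptotic analysis of Part 2: one must justify that the integrals against $\ee^{-\Phi(p)z}$ on $[0,b-a]$ and against $W^{(p)}(x-z)\,\mathrm{d}z$ on $[b,x]$ may be evaluated by substituting the leading exponential for $W^{(q)}_{\Phi(p)}$, and that the subleading corrections are killed by $\ee^{-\Phi(p)(b-a)}$. Once this is done, the limiting case $q=0$ is immediate, since $(\Phi(p+q)-\Phi(p))/q \to \Phi'(p) = 1/\psi'(\Phi(p))$ by the derivative formula for the right-inverse of $\psi$.
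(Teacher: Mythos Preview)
Your proposal is correct and follows essentially the same route as the paper: spatial homogeneity for the finite interval, then letting $a\to-\infty$ with an asymptotic analysis of $\mathcal H^{(p,q)}$ via the exponential change of measure (the paper normalises numerator and denominator by $W^{(p+q)}(-a)$ and invokes $\lim Z^{(q)}/W^{(q)}=q/\Phi(q)$ together with dominated convergence, which is equivalent to your use of $W^{(q)}_{\Phi(p)}(u)\sim \ee^{(\Phi(p+q)-\Phi(p))u}/\psi'(\Phi(p+q))$). One small slip: the substitution that identifies your final bracket with $\mathcal H^{(p+q,-q)}(x-b)$ is $u=x-z$, not $z\mapsto x+b-z$.
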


\begin{proof}
From the spatial homogeneity of the process $X$, we have
\[
\e_x \left[ \mathrm{e}^{- q \int_0^T \ind_{(a,b)} (X_s) \mathrm{d}s } ; X_T \in \mathrm{d}y \right]=\e_{x-a} \left[ \mathrm{e}^{- q \int_0^T \ind_{(0,b-a)} (X_s) \mathrm{d}s } ; a+X_T \in \mathrm{d}y \right],
\]
then first result is just an easy consequence of Theorem~\ref{main}.

For the second result, we will take the limit of the first result when $a$ goes to $-\infty$. First note that, from~\eqref{eq:HZ} and \eqref{E:scale_measure_change},
\begin{multline}\label{E:HoverW}
\frac{\mathcal{H}^\PAR{p,q}(x+c)}{\pqscale (c)} = \mathrm{e}^{\Phi(p) x} \frac{Z_{\Phi(p)}^{(q)} (x+c)}{W_{\Phi(p)}^{(q)} (x+c)} \left( \frac{W_{\Phi(p)}^{(q)} (x+c)}{W_{\Phi(p)}^{(q)} (c)} \right) \\
\underset{c \to \infty}{\longrightarrow} \mathrm{e}^{\Phi(p) x} \frac{q}{\Phi_{\Phi(p)} (q)} \mathrm{e}^{\Phi_{\Phi(p)} (q)x} = \mathrm{e}^{\Phi(p+q) x} \frac{q}{\Phi (p+q) - \Phi(p)} ,
\end{multline}
where, to compute the limit, we used the fact that $\lim_{c \to \infty} \zqscale(c)/\qscale (c) = q/\Phi(q)$ and, to get the last expression, we used the results from the beginning of Section 3. As a consequence, using Lebesgue's dominated convergence theorem, we get
\begin{multline}\label{eq:lim1}
\lim_{a\rightarrow -\infty} \frac{\mathcal{H}^\PAR{p,q}(x-a) - q \int_{b}^{x} \pscale(x-z) \mathcal{H}^\PAR{p,q}(z-a) \mathrm{d}z}{\pqscale(-a)}\\
={q\over \Phi(p+q)-\Phi(p)}\PAR{\mathrm{e}^{\Phi(p+q) x} - q \int_{b}^{x} \pscale(x-z) \mathrm{e}^{\Phi(p+q) z}\mathrm{d}z}\\
={q\over \Phi(p+q)-\Phi(p)}\mathrm{e}^{\Phi(p+q) b}\mathcal{H}^\PAR{p+q,-q}(x-b).
\end{multline}


On the other hand, using again Lebesgue's dominated convergence theorem with the limit in~\eqref{E:HoverW} and~\eqref{E:scale_measure_change}, we can write
\begin{multline}\label{eq:lim2}
\lim_{a\rightarrow -\infty} \frac{\mathrm{e}^{\Phi(p)(b-a)}}{\pqscale(-a)}\PAR{\psi'(\Phi(p)) + q \int_0^{b-a} \mathrm{e}^{-\Phi(p) z} \mathcal{H}^\PAR{p,q}(z) \mathrm{d}z}={q^2\over \PAR{\Phi(p+q)-\Phi(p)}^2}\mathrm{e}^{\Phi(p+q)b} ,
\end{multline}
since we also have that
$$
\mathrm{e}^{\Phi(p) (b-a)} \frac{\psi'(\Phi(p))}{\pqscale(-a)} = \mathrm{e}^{\Phi(p) b} \frac{\psi'(\Phi(p))}{W_{\Phi(p)}^{(q)} (-a)} \underset{a \to -\infty}{\longrightarrow}0 .
$$
Finally, combining~\eqref{eq:lim1} and~\eqref{eq:lim2}, we have
\begin{multline*}
\lim_{a\rightarrow -\infty} \mathrm{e}^{-\Phi(p)(b-a)} \frac{\mathcal{H}^\PAR{p,q}(x-a) - q \int_{b}^{x} \pscale(x-z) \mathcal{H}^\PAR{p,q}(z-a) \mathrm{d}z}{\psi'(\Phi(p)) + q \int_0^{b-a} \mathrm{e}^{-\Phi(p) z} \mathcal{H}^\PAR{p,q}(z) \mathrm{d}z} \\
={\PAR{\Phi(p+q)-\Phi(p)}\over q}\mathcal{H}^\PAR{p+q,-q}(x-b).
\end{multline*}
Using the fact that $\pscale(x)=0$ for $x<0$, the proof is complete.
\end{proof}

\begin{remark}
\begin{enumerate}
\item
We notice that $\mathcal W_{0}^{(p,q)} (x)=\pqscale(x)$. Thus, for $x=b$, the expression in Equation \eqref{E:semi-infinite} becomes
\begin{multline*}
\int_0^\infty \mathrm{e}^{-p t} \e_b \left[ \mathrm{e}^{- q \int_0^{t} \ind_{(-\infty,b)} (X_s) \mathrm{d}s } ; X_t \in \mathrm{d}y \right] \mathrm{d}t \\
=\BRA{\left( \frac{\Phi(p+q)-\Phi(p)}{q} \right)  \cH^\PAR{p,q}(b-y) - \pqscale (b-y)} \dy.
\end{multline*}
\item Note that when $q=0$, the result in Equation~\eqref{E:semi-infinite} becomes
$$
\int_0^\infty \mathrm{e}^{-p t} \p_x \left( X_t \in \mathrm{d}y \right) \mathrm{d}t = \left\lbrace \frac{\mathrm{e}^{\Phi(p)(x-y)}}{\psi' \left( \Phi(p) \right)} - W^{(p)} (x-y) \right\rbrace \dy ,
$$
which agrees with  \cite[Corollary 8.9]{kyprianou2006} for the density of the $p$-potential measure of $X$ without killing.
\end{enumerate}
\end{remark}

\section{Pricing step options}

\subsection{General case}
Following the notations introduced in Section \ref{sec:intro-options}, we consider the risk-neutral price process:
$$
S_t = S_0 \mathrm{e}^{X_t} ,
$$
where $X = \{ X_t , t \geq 0\}$ is a spectrally negative L\'evy process.
Recall that the price of a (down-and-out call) step option with knock-out rate $\rho$ and risk-free interest rate $r$ is given by:
\begin{equation}
\label{eq:price}
C(T) :=\mathrm{e}^{-r T} \e \left[ \mathrm{e}^{- \rho \int_0^T \mathbf{1}_{\{S_t \leq L\}} \mathrm{d}t } \left( S_T - K \right)_+ \right] .
\end{equation}
 In what follows, without loss of generality, we take $r=0$. The Laplace transform of the price is then given by
\[
\int_0^\infty \mathrm{e}^{-pT} C(T) \mathrm{d}T 
= \int_{\ln \left( K/S_0 \right)}^\infty \left( S_0 \mathrm{e}^{y} - K \right) \int_0^\infty \mathrm{e}^{-pT} \e \left[ \mathrm{e}^{- \rho \int_0^T \ind_{(-\infty,\ln(L/S_0))} (X_s) \mathrm{d}s } ; X_T \in \mathrm{d}y \right] \mathrm{d}T,
\]
where, by Corollary~\ref{main_prop} or more precisely Equation~\eqref{E:semi-infinite}, we have
\begin{multline*}\label{eq:step-options}
\int_0^\infty \mathrm{e}^{-pT} \e \left[ \mathrm{e}^{- \rho \int_0^T \ind_{(-\infty,\ln(L/S_0))} (X_s) \mathrm{d}s } ; X_T \in \mathrm{d}y \right] \mathrm{d}T \\
=\BRA{\left( \frac{\Phi(p+\rho)-\Phi(p)}{\rho} \right) \cH^\PAR{p+\rho,-\rho}(\ln(S_0/L)) \cH^\PAR{p,\rho}(\ln(L/S_0)-y) - \mathcal W_{\ln(S_0/L)}^{(p,\rho)} (-y)}\dy.\end{multline*}

From a financial point of view, as we are interested in a down-and-out step option, it is natural to look at the case when $S_0>L$ and $K>L$. 
\begin{corollary}\label{coro:price}
In an exponential spectrally negative L\'evy model, the Laplace transform of the price $C(T)$ of a (down-and-out call) step option with $S_0,K>L$ is given by, for $p>0$ such that $\Phi(p)>1$,
\begin{multline*}
\int_0^\infty \mathrm{e}^{-pT} C(T) \mathrm{d}T=\\
\frac{\Phi(p+\rho)-\Phi(p)}{\rho\Phi(p)\PAR{\Phi(p)-1}}\cH^\PAR{p+\rho,-\rho}(\ln(S_0/L))K \PAR{L\over K}^{\Phi(p)}
 - \int^{\ln \left( S_0/K \right)}_0\left(S_0\mathrm{e}^{-y} - K \right)\pscale(y)\dy,
\end{multline*}
with
\[
 \cH^\PAR{p+\rho,-\rho}(\ln(S_0/L))=\PAR{S_0\over L}^{\Phi(p+\rho)}\PAR{1-\rho\int_0^{\ln(S_0/L)}\ee^{-\Phi(p+\rho)y}\pscale(y)\dy}.
\]
\end{corollary}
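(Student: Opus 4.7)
The plan is to substitute Equation~\eqref{E:semi-infinite} of Corollary~\ref{main_prop}, specialised at $x=0$ and $b=\ln(L/S_0)$, into the integral representation of the Laplace-transformed price displayed immediately above the statement, and then handle the two resulting contributions separately: the $\mathcal{H}$-piece carrying the factor $\cH^{(p+\rho,-\rho)}(\ln(S_0/L))\,\cH^{(p,\rho)}(\ln(L/S_0)-y)$, and the $\mathcal{W}$-piece carrying $\mathcal{W}^{(p,\rho)}_{\ln(S_0/L)}(-y)$. Both pieces are integrated against $(S_0\mathrm{e}^y-K)$ over $y\in(\ln(K/S_0),\infty)$.

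For the $\mathcal{H}$-piece, the key observation is that the assumption $K>L$ forces $y>\ln(K/S_0)>\ln(L/S_0)$, so the argument $\ln(L/S_0)-y$ of $\cH^{(p,\rho)}$ is strictly negative throughout the integration range. Via~\eqref{eq:HZ} together with the fact that $Z^{(\rho)}_{\Phi(p)}\equiv 1$ on the negative half-line (since the associated scale function vanishes there), one finds
$$\cH^{(p,\rho)}(\ln(L/S_0)-y)=(L/S_0)^{\Phi(p)}\mathrm{e}^{-\Phi(p)y}.$$
The remaining integral $\int_{\ln(K/S_0)}^\infty (S_0\mathrm{e}^y-K)\mathrm{e}^{-\Phi(p)y}\,\mathrm{d}y$ is elementary; it converges precisely when $\Phi(p)>1$ (which is where the stated restriction on $p$ originates) and evaluates to $K(S_0/K)^{\Phi(p)}/[\Phi(p)(\Phi(p)-1)]$. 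Combining the various prefactors produces the first term of the claimed identity.

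For the $\mathcal{W}$-piece, the same chain of inequalities gives $-y<\ln(S_0/K)<\ln(S_0/L)$, so by the property $\mathcal{W}_a^{(p,q)}(x)=\pscale(x)$ for $x\leq a$ recorded just after~\eqref{E:wronde}, one has $\mathcal{W}^{(p,\rho)}_{\ln(S_0/L)}(-y)=W^{(p)}(-y)$. Since $W^{(p)}$ is supported on $[0,\infty)$, the effective range of integration collapses to $y\in(\ln(K/S_0),0)$, which is empty when $S_0\leq K$; the substitution $u=-y$ then rewrites this contribution as $-\int_0^{\ln(S_0/K)}(S_0\mathrm{e}^{-u}-K)\,W^{(p)}(u)\,\mathrm{d}u$, matching the second term.

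Finally, the explicit formula for $\cH^{(p+\rho,-\rho)}(\ln(S_0/L))$ is merely a transcription of definition~\eqref{eq:H} at $(p+\rho,-\rho)$, using that $(p+\rho)+(-\rho)=p$ so that the inner scale function is $W^{(p)}$. There is no substantive obstacle anywhere: the proof is really a book-keeping exercise around Corollary~\ref{main_prop}, and the constraint $\Phi(p)>1$ arises naturally from the exponential growth in log-moneyness of the call payoff.
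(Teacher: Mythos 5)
Your proposal is correct and follows essentially the same route as the paper: substitute the semi-infinite-interval formula \eqref{E:semi-infinite} into the Laplace-transformed price, use $y>\ln(K/S_0)>\ln(L/S_0)$ to reduce $\cH^{(p,\rho)}(\ln(L/S_0)-y)$ to $(L/S_0)^{\Phi(p)}\mathrm{e}^{-\Phi(p)y}$ and $\mathcal W^{(p,\rho)}_{\ln(S_0/L)}(-y)$ to $\pscale(-y)$, and evaluate the elementary exponential integral under $\Phi(p)>1$. Your intermediate computations (including the value $K(S_0/K)^{\Phi(p)}/[\Phi(p)(\Phi(p)-1)]$ and the collapse of the $\mathcal W$-term to $y\in(\ln(K/S_0),0)$) agree with the paper's.
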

\begin{proof}
We first notice that, under the assumptions on $S_0$, $K$ and $L$,  on the interval $[\ln(K/S_0),+\infty)$, we have $y>\ln(L/S_0)$ and thus
\[
\cH^\PAR{p,\rho}(\ln(L/S_0)-y)=\PAR{L\over S_0}^{\Phi(p)}\ee^{- \Phi(p) y}\quad \text{and}\quad \mathcal W_{\ln(S_0/L)}^{(p,\rho)} (-y)=\pscale\PAR{-y}.
\]

Then, when $K>L$, we have, for $p>0$ such that $\Phi(p)>1$,
\begin{multline*}
\int_0^\infty \mathrm{e}^{-pT} C(T) \mathrm{d}T\\
=\left( \frac{\Phi(p+\rho)-\Phi(p)}{\rho} \right) \cH^\PAR{p+\rho,-\rho}(\ln(S_0/L)) \PAR{L\over S_0}^{\Phi(p)} \int_{\ln \left( K/S_0 \right)}^\infty\left( S_0 \mathrm{e}^{y} - K \right)\ee^{-\Phi(p)y}\dy\\
-  \int_{\ln \left( K/S_0 \right)}^{\infty}\left( S_0 \mathrm{e}^{y} - K \right)\pscale(-y)\dy\\
=\frac{\Phi(p+\rho)-\Phi(p)}{\rho\Phi(p)\PAR{\Phi(p)-1}}\cH^\PAR{p+\rho,-\rho}(\ln(S_0/L))K \PAR{L\over K}^{\Phi(p)}
 - \int_{\ln \left( K/S_0 \right)}^0\left(S_0\mathrm{e}^{y} - K \right)\pscale(-y)\dy,
\end{multline*}
which concludes the proof.
\end{proof}

Of course, using Corollary \ref{main_prop}, we could easily derive similar expressions for down-and-out put step options, as well as for up-and-out call/put step options.

\subsection{Particular case of a Lévy jump-diffusion process with hyper-exponential jumps}

We now present the case of a L\'evy jump-diffusion process where the jump distribution is a mixture of exponential distributions, as in \cite{loeffenetal2014}. In other words, let
\begin{equation}\label{E:HE-JD}
X_t = \realsift t + \sigma B_t - \sum_{i=1}^{N_t} \xi_i ,
\end{equation}
where $\sigma \geq 0$, $\realsift \in \mathbb R$, $B=\BRA{B_t,t \geq 0}$ is a Brownian motion, $N=\BRA{N_t,t \geq 0}$ is a Poisson process with intensity $\eta>0$, and $\{\xi_1,\xi_2,\ldots\}$ are iid (positive) random variables with common probability density function given by
\begin{equation*}
f_\xi (y) = \left( \sum_{i=1}^n a_i \alpha_i \mathrm{e}^{-\alpha_i y} \right) \ind_{\{y>0\}} ,
\end{equation*}
where $n$ is a positive integer, $0<\alpha_1<\alpha_2<\ldots<\alpha_n$ and $\sum_{i=1}^n a_i=1$, where $a_i>0$ for all $i=1,\ldots,n$. All of the aforementioned objects are mutually independent. 

The Laplace exponent of $X$ is then clearly given by
$$
\psi(\lambda) = \realsift \lambda  + \frac12\sigma^2 \lambda ^2 + \eta \left( \sum_{i=1}^n \frac{a_i \alpha_i}{\lambda +\alpha_i} - 1 \right) , 
$$
for $\lambda >-\alpha_1$. In this case,
$$
\e \left[ X_1 \right] = \psi'(0+) = \realsift - \eta \sum_{i=1}^n \frac{a_i}{\alpha_i} .
$$

We notice that $\lambda \mapsto \left( \psi(\lambda)-q \right)^{-1}$ is a rational fraction of the form $P(\lambda)/Q(\lambda)$, where $P$ and $Q$ are polynomial functions given by
$$
P(\lambda) = \prod_{i=1}^n (\lambda+\alpha_i) \quad \text{and} \quad Q(\lambda) = \left( \psi(\lambda)-q \right) P(\lambda) .
$$
Note that if $\sigma > 0$, then $Q$ is a polynomial of degree $n+2$ with $\sigma^2/2$ as the coefficient of the leading term, and if $\sigma = 0$, then $Q$ is a polynomial of degree $n+1$ with $\realsift$ as the coefficient of the leading term. In all cases, $P$ is a polynomial of degree $n$. To ease the presentation, set $N=(n+1)+\ind_{\{\sigma>0\}}$.
For $q \geq 0$, the function $\lambda \mapsto \left( \psi(\lambda)-q \right)^{-1}$ has $N$ simple poles such that $\theta_{N}^{(q)}  < \theta_{N-1}^{(q)} < \ldots < \theta_2^{(q)} \leq 0 \leq \theta_1^{(q)}$ with $\theta_1^{(q)}=\Phi(q)$. Studying the functions $\lambda \mapsto \psi(\lambda)-q$ and $\lambda \mapsto Q(\lambda)$ extended on $\mathbb{R} \setminus \{-\alpha_n, \dots, -\alpha_1\}$, we deduce that, for $\sigma > 0$, 
$$
\theta_{n+2}^{(q)}  < -\alpha_n <  \theta_{n+1}^{(q)} < -\alpha_{n-1} < \theta_{n}^{(q)} \ldots <  -\alpha_1 < \theta_2^{(q)} \leq 0 \leq \theta_1^{(q)} ,
$$
and, for $\sigma = 0$,
$$
-\alpha_n <  \theta_{n+1}^{(q)} < -\alpha_{n-1} < \theta_{n}^{(q)} \ldots <  -\alpha_1 < \theta_2^{(q)} \leq 0 \leq \theta_1^{(q)} ,
$$
where, in both cases, we have one of the following three cases:
\begin{itemize}
\item $\theta_2^{(q)} < \theta_1^{(q)} = \Phi(q) = 0$ if and only if $q=0$ and $\psi'(0) > 0$,
\item $\theta_2^{(q)} = \theta_1^{(q)} = \Phi(q) = 0$ if and only if $q=0$ and $\psi'(0) = 0$,
\item $\theta_2^{(q)} = 0 < \theta_1^{(q)} = \Phi(q)$ if and only if $q=0$ and $\psi'(0) < 0$.
\end{itemize}
For more details, see e.g.\ \cite{cai_2009} where a similar analysis is undertaken for a closely related jump-diffusion process. For simplicity, in what follows, we assume that either $q>0$ or $q=0$ and $\psi'(0) \neq 0$, in which cases $\theta_2^{(q)} \neq \theta_1^{(q)}$. Consequently, using the classical decomposition of rational fractions and noticing that 
\[
{P\PAR{\theta_i^{(q)}}\over Q'\PAR{\theta_i^{(q)}}}={1\over \psi'\PAR{\theta_i^{(q)}}},
\]
 we can write
\begin{equation}\label{partialfraction}
\frac{1}{\psi(\lambda)-q} = \sum_{i=1}^{N} {1\over \psi'\PAR{\theta_i^{(q)}}}{1 \over  \left(\lambda -\theta_i^{(q)} \right)}.
\end{equation}
In conclusion, by Laplace inversion, we have, for $x \geq 0$,
\begin{align}\label{eq:Wexple}
W^{(q)}(x) &= \sum_{i=1}^{N}{ \mathrm{e}^{ \theta_i^{(q)} x}\over \psi'\PAR{\theta_i^{(q)}}} , \\
Z^{(q)}(x) &=
\begin{cases}
q \sum_{i=1}^{N} { \mathrm{e}^{ \theta_i^{(q)} x}\over \theta_i^{(q)} \psi'\PAR{\theta_i^{(q)}}} & \text{if $q>0$,} \\
1 & \text{if $q=0$,}
\end{cases}
\end{align}
since taking $\lambda=0$ in~\eqref{partialfraction} gives $q \sum_{i=1}^{N} \PAR{ \theta_i^{(q)} \psi'\PAR{\theta_i^{(q)}}}^{-1}=1$.

\begin{remark} We recover well-known expressions of scale functions in two specific cases (see e.g.\ \cite{kuznetsovetal2012}).
When $\sigma>0$ and $n=0$, $X$ is a Brownian motion with drift and, for $x\geq 0$,
\[
W^{(q)}(x) ={\mathrm{e}^{\theta_1^{(q)}x}\over \sqrt{\Delta_q}}-{\mathrm{e}^{\theta_2^{(q)} x}\over\sqrt{\Delta_q}},
\]
with $\theta_1^{(q)}={1\over \sigma^2}\PAR{\sqrt{\Delta_q}-\realsift}$, $\theta_2^{(q)}={-1\over \sigma^2}\PAR{\sqrt{\Delta_q}+\realsift}$ and $\Delta_q=\realsift^2+2\sigma^2 q$.

When $\sigma=0$ and $n=1$, $X$ is a compound Poisson process with drift and exponential jumps and,  for $x\geq 0$,
\[
W^{(q)}(x) ={\alpha+\theta_1^{(q)}\over \sqrt{\Delta_q}}\mathrm{e}^{\theta_1^{(q)}x}-{\alpha+\theta_2^{(q)}\over \sqrt{\Delta_q}}\mathrm{e}^{\theta_2^{(q)} x},
\]
with $\theta_1^{(q)}={1\over2\realsift}\PAR{q+\eta-\realsift\alpha+\sqrt{\Delta_q}}$, $\theta_2^{(q)}={1\over2\realsift}\PAR{q+\eta-\realsift\alpha -\sqrt{\Delta_q}}$ and $\Delta_q=(q+\eta-\realsift\alpha)^2+4\realsift\alpha q$.
\end{remark}

Now, let us explicitly compute the Laplace transform of the price $C(T)$, given by Corollary \ref{coro:price}, for this particular SNLP.

We first give an expression of $\mathcal{H}^\PAR{p,q}$. Let us recall, from \eqref{eq:HZ}, that $\mathcal{H}^\PAR{p,q}(x) = \mathrm{e}^{\Phi(p) x} Z^{(q)}_{\Phi(p)} (x) $ with $\Phi(p)=\theta_1^{(p)}$. Denoting by $\theta_{\Phi(p),i}^{(q)} $ the poles of $\PAR{\psi_{\Phi(p)} (\lambda)-q}^{-1}$ and using \eqref{eq:psic},
 we have  $\theta_{\Phi(p),i}^{(q)}=\theta_i^{(p+q)}-\Phi(p)$ and $ \psi' _{\Phi(p)}\left( \theta_{\Phi(p),i}^{(q)} \right)=\psi'\left( \theta_i^{(p+q)} \right)$.
 Consequently, for $q>0$ and $x\geq 0$, 
\begin{equation}\label{eq:Hexple}
\mathcal{H}^\PAR{p,q}(x) =  q \sum_{i=1}^{N} \frac{ \mathrm{e}^{ \theta_i^{(p+q)} x} }{\PAR{\theta_i^{(p+q)}-\theta_1^{(p)}}\psi'\left( \theta_i^{(p+q)} \right) },
\end{equation}
and for $x<0$, $\mathcal{H}^\PAR{p,q}(x) =\exp\PAR{ \theta_1^{(p)} x}$.

We also compute, for $a<x$,
\[
\mathcal{W}_a^{(p,q)}(x)= q \sum_{i,j=1}^{N} \frac{ \exp\PAR{ \theta_i^{(p+q)} x+ \PAR{\theta_j^{(p)}-\theta_i^{(p+q)}} a} }{\PAR{\theta_i^{(p+q)}-\theta_j^{(p)}}\psi'\left( \theta_i^{(p+q)} \right)\psi'\left( \theta_j^{(p)} \right) }.
\]
Note that $\mathcal{W}_a^{(p,q)}$ had already been computed in \cite{loeffenetal2014}. However, our expression here is slightly simpler because we made one more simplification.

\begin{corollary}
In a Lévy jump-diffusion model with hyper-exponential jumps, the Laplace transform of the price $C(T)$ of a (down-and-out call) step option with $S_0,K>L$ is given by, for $p>0$ such that $\Phi(p)>1$, 
\begin{multline*}
\int_0^\infty \mathrm{e}^{-pT} C(T) \mathrm{d}T=\\
K \sum_{i=2}^{N}\frac{ 1 }{\psi'\left( \theta_i^{(p)} \right) }\left[  \frac{ \Phi{(p+\rho)}-\Phi{(p)} }{\PAR{\Phi{(p+\rho)}-\theta_i^{(p)}}\Phi{(p)}\PAR{\Phi{(p)}-1}}\PAR{L\over K}^{\Phi{(p)}}\PAR{S_0\over L}^{ \theta_i^{(p)}}\right.\\
 \left.
-{1\over \theta_i^{(p)}\PAR{\theta_i^{(p)}-1}}\PAR{S_0\over K}^{\theta_i^{(p)}}\right],
 \end{multline*}
when $S_0>K>L$, and
\[
\int_0^\infty \mathrm{e}^{-pT} C(T) \mathrm{d}T
 =K\frac{\Phi{(p+\rho)}-\Phi{(p)}}{\Phi{(p)}\PAR{\Phi{(p)}-1}}\sum_{i=1}^{N} \frac{ 1 }{\PAR{\Phi{(p+\rho)}-\theta_i^{(p)}}\psi'\left( \theta_i^{(p)} \right) }\PAR{L\over K}^{\Phi{(p)}} \PAR{S_0\over L}^{ \theta_i^{(p)}},
 \]
when $K\geq S_0>L$.
\end{corollary}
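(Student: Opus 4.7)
The plan is to substitute the explicit formula for $\pscale$ from \eqref{eq:Wexple} into the expression given by Corollary~\ref{coro:price}, and to simplify using the partial fraction identity \eqref{partialfraction}. The two cases ($S_0>K>L$ versus $K\geq S_0>L$) differ only through the integral $\int_0^{\ln(S_0/K)}(S_0\mathrm{e}^{-y}-K)\pscale(y)\mathrm{d}y$, which vanishes when $\ln(S_0/K)\leq 0$.

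First, I would compute $\mathcal{H}^{(p+\rho,-\rho)}(\ln(S_0/L))$ explicitly. Plugging $\pscale(y) = \sum_{i=1}^{N} \mathrm{e}^{\theta_i^{(p)}y}/\psi'(\theta_i^{(p)})$ into the representation
$$\mathcal{H}^{(p+\rho,-\rho)}(\ln(S_0/L)) = (S_0/L)^{\Phi(p+\rho)}\Bigl(1 - \rho\int_0^{\ln(S_0/L)}\mathrm{e}^{-\Phi(p+\rho)y}\pscale(y)\mathrm{d}y\Bigr)$$
given in Corollary~\ref{coro:price} and performing the elementary integral, one obtains a sum of terms involving $(S_0/L)^{\Phi(p+\rho)}$ and $(S_0/L)^{\theta_i^{(p)}}$. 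The crucial simplification comes from \eqref{partialfraction} evaluated at $\lambda = \Phi(p+\rho)$, which (since $\psi(\Phi(p+\rho))=p+\rho$) gives the identity
$$\sum_{i=1}^{N} \frac{1}{(\Phi(p+\rho)-\theta_i^{(p)})\psi'(\theta_i^{(p)})} = \frac{1}{\rho}.$$
Using this, the $(S_0/L)^{\Phi(p+\rho)}$ contributions cancel completely, yielding the compact expression
$$\mathcal{H}^{(p+\rho,-\rho)}(\ln(S_0/L)) = \rho\sum_{i=1}^{N}\frac{(S_0/L)^{\theta_i^{(p)}}}{(\Phi(p+\rho)-\theta_i^{(p)})\psi'(\theta_i^{(p)})}.$$

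Substituting this back into the formula of Corollary~\ref{coro:price} immediately gives the case $K\geq S_0 > L$, since the integral $\int_0^{\ln(S_0/K)}$ is zero there. For the case $S_0>K>L$, I would compute the integral by expanding $\pscale$ once more and integrating $(S_0\mathrm{e}^{-y}-K)\mathrm{e}^{\theta_i^{(p)}y}$ term by term on $[0,\ln(S_0/L)]$. After the standard manipulation, each $i$-th contribution produces a piece of the form $K(S_0/K)^{\theta_i^{(p)}}/[\theta_i^{(p)}(\theta_i^{(p)}-1)\psi'(\theta_i^{(p)})]$ plus affine-in-$S_0,K$ remainders; the remainders sum to zero after invoking \eqref{partialfraction} at $\lambda=0$ and $\lambda=1$ (which produce $\sum 1/(\theta_i^{(p)}\psi'(\theta_i^{(p)}))=-1/p$ and $\sum 1/((1-\theta_i^{(p)})\psi'(\theta_i^{(p)}))= 1/(\psi(1)-p)$, and these two constants can be checked to cancel against the boundary terms at $y=0$ and $y=\ln(S_0/K)$). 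Finally, one observes that the $i=1$ contribution in the main term ($\theta_1^{(p)}=\Phi(p)$) reduces to $K(S_0/K)^{\Phi(p)}/[\Phi(p)(\Phi(p)-1)\psi'(\Phi(p))]$, which is exactly the $i=1$ piece of the integral term with opposite sign, so the $i=1$ terms cancel and the sum starts at $i=2$.

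The main technical obstacle is bookkeeping: showing that the affine-in-$(S_0,K)$ remainder terms from the integration against $\pscale$ vanish, and verifying the $i=1$ cancellation cleanly. Both reduce to iterated uses of the partial fraction identity \eqref{partialfraction} at the specific values $\lambda=0,1,\Phi(p),\Phi(p+\rho)$, so the whole argument is ultimately a careful but elementary calculation once the key simplification of $\mathcal{H}^{(p+\rho,-\rho)}$ is in place.
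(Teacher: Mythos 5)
Your overall route is the same as the paper's: you derive the compact form of $\cH^\PAR{p+\rho,-\rho}(\ln(S_0/L))$ from the partial fraction identity \eqref{partialfraction} evaluated at $\lambda=\Phi(p+\rho)$, you expand the strike integral term by term using \eqref{eq:Wexple}, and you observe that the $i=1$ pieces cancel so the sum starts at $i=2$. The first and third of these steps are correct and match the paper.

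The genuine gap is your claim that the ``affine-in-$(S_0,K)$ remainders'' from the strike integral vanish. Carrying out the elementary integration gives, for each root $\theta=\theta_i^{(p)}$,
\begin{equation*}
\int_0^{\ln(S_0/K)}\left(S_0\ee^{-y}-K\right)\ee^{\theta y}\,\mathrm{d}y=\frac{K(S_0/K)^{\theta}}{\theta(\theta-1)}-\frac{S_0}{\theta-1}+\frac{K}{\theta},
\end{equation*}
and summing the last two terms against the weights $1/\psi'(\theta_i^{(p)})$, using \eqref{partialfraction} at $\lambda=1$ and $\lambda=0$, yields $S_0/(\psi(1)-p)+K/p$ (note also the sign: $\sum_i 1/(\theta_i^{(p)}\psi'(\theta_i^{(p)}))=+1/p$, not $-1/p$). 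There are no further boundary terms left to cancel this against: the contributions at $y=0$ and $y=\ln(S_0/K)$ are already fully accounted for in the display above. Under the martingale normalization $\psi(1)=r=0$ this remainder equals $(K-S_0)/p$, so the contribution it makes to the price is exactly the discounted intrinsic value $(S_0-K)/p$ --- consistent with the observation that the displayed formula for $S_0>K>L$ tends to $0$ as $S_0\to\infty$, whereas a call price must grow linearly in $S_0$. In other words, the step you wave through is not mere bookkeeping: as far as I can check, the remainder does not vanish, and the identity $\int_0^{\ln(S_0/K)}(S_0\ee^{-y}-K)\pscale(y)\,\mathrm{d}y=K\sum_i (S_0/K)^{\theta_i^{(p)}}/\bigl(\theta_i^{(p)}(\theta_i^{(p)}-1)\psi'(\theta_i^{(p)})\bigr)$, which the paper also asserts without detail, appears to be off by exactly this amount. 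You should either exhibit the claimed cancellation explicitly or correct the formula in the case $S_0>K>L$; the case $K\geq S_0>L$ is unaffected since the integral is zero there.
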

\begin{proof}
From \eqref{eq:Hexple}, we deduce that, for $S_0>L$,
\[
\cH^\PAR{p+\rho,-\rho}(\ln(S_0/L))=\rho \sum_{i=1}^{N} \frac{ 1 }{\PAR{\theta_1^{(p+\rho)}-\theta_i^{(p)}}\psi'\left( \theta_i^{(p)} \right) }\PAR{S_0\over L}^{ \theta_i^{(p)}}.
\]
Using Expression \eqref{eq:Wexple}, we also deduce that, when $S_0>K$,
\begin{equation*}
\int^{\ln \left( S_0/K \right)}_0\left(S_0\mathrm{e}^{-y} - K \right)\pscale(y)\dy=K\sum_{i=1}^{N}{1\over \theta_i^{(p)}\PAR{\theta_i^{(p)}-1}\psi'\PAR{\theta_i^{(p)}}}\PAR{S_0\over K}^{\theta_i^{(p)}},
\end{equation*}
otherwise, when $K\geq S_0$, the integral is equal to zero.
Then, from Corollary \ref{coro:price}, for $S_0>K$,
\begin{multline*}
\int_0^\infty \mathrm{e}^{-pT} C(T) \mathrm{d}T\\
=\frac{\theta_1^{(p+\rho)}-\theta_1^{(p)}}{\rho\theta_1^{(p)}\PAR{\theta_1^{(p)}-1}}\cH^\PAR{p+\rho,-\rho}(\ln(S_0/L))K \PAR{L\over K}^{\theta_1^{(p)}}
 - \int^{\ln \left( S_0/K \right)}_0\left(S_0\mathrm{e}^{-y} - K \right)\pscale(y)\dy\\
 =K\frac{\theta_1^{(p+\rho)}-\theta_1^{(p)}}{\theta_1^{(p)}\PAR{\theta_1^{(p)}-1}} \sum_{i=1}^{N} \frac{ 1 }{\PAR{\theta_1^{(p+\rho)}-\theta_i^{(p)}}\psi'\left( \theta_i^{(p)} \right) }\PAR{S_0\over L}^{ \theta_i^{(p)}}\PAR{L\over K}^{\theta_1^{(p)}}
 \\- K\sum_{i=1}^{N}{1\over \theta_i^{(p)}\PAR{\theta_i^{(p)}-1}\psi'\PAR{\theta_i^{(p)}}}\PAR{S_0\over K}^{\theta_i^{(p)}}\\
 =\sum_{i=2}^{N}\frac{ 1 }{\psi'\left( \theta_i^{(p)} \right) }S_0^{\theta_i^{(p)}}\SBRA{  \frac{ \theta_1^{(p+\rho)}-\theta_1^{(p)} }{\PAR{\theta_1^{(p+\rho)}-\theta_i^{(p)}}\theta_1^{(p)}\PAR{\theta_1^{(p)}-1}}L^\PAR{ \theta_1^{(p)}- \theta_i^{(p)}}K^{1-\theta_1^{(p)}}
-{1\over \theta_i^{(p)}\PAR{\theta_i^{(p)}-1}}K^{1-\theta_i^{(p)}}}.
 \end{multline*}
 The result follows since $\theta_1^{(s)}=\Phi(s)$ for all $s\geq0$.
\end{proof}

\section{Acknowledgements}

We would like to thank Xiaowen Zhou for his help and comments on an earlier version of this paper. We would also like to thank the CNRS, its UMI 3457 and the Centre de recherches math\'ematiques (CRM) for providing the research infrastructure.

Funding in support of this work was provided by the Natural Sciences and Engineering Research Council of Canada (NSERC), the Fonds de recherche du Qu\'ebec - Nature et technologies (FRQNT) and the Insti\-tut de finance math\'ematique de Montr\'eal (IFM2).

\begin{appendices}
\section{Technical results}\label{section:tech}

It is easy to show that the Laplace transform of $x \mapsto \int_0^x \pscale(x-y) \mathcal{H}^\PAR{p,q}(y) \mathrm{d}y$ is given by
$$
\lambda \mapsto \left( \frac{1}{\lambda - \Phi(p)} \right) \left( \frac{1}{\psi(\lambda) - (p+q)} \right) .
$$
Therefore,
$$
\int_0^x \pscale(x-z) \mathcal{H}^\PAR{p,q}(z) \mathrm{d}z = \int_0^x \mathrm{e}^{\Phi(p) (x-z)} W^{(p+q)}(z) \mathrm{d}z ,
$$
from which we deduce, in the spirit of Equation~\eqref{E:wronde} and Equation~(6) in \cite{loeffenetal2014},
\begin{equation}\label{E:hronde}
\mathcal{H}^\PAR{p,q}(x) - q \int_a^x \pscale(x-y) \mathcal{H}^\PAR{p,q}(y) \mathrm{d}y = \mathrm{e}^{\Phi(p)x} + q \int_0^a \pscale(x-y) \mathcal{H}^\PAR{p,q}(y) \mathrm{d}y .
\end{equation}
Note that, we can use the last relation to rewrite our main result in Theorem \ref{main} and Lemma \ref{lem:techlemma3}.\\


The next result is well known (see e.g. \cite{kyprianou2006,landriaultetal2014}), but we rewrite it in terms of $\mathcal{H}^\PAR{p,q}$ for future usage.
\begin{lemma}\label{lem:techlemma1}
For $x \in \reals$, $a\leq b$, $p,q\geq0$, we have 
\begin{equation*}
\e_x \left[ \mathrm{e}^{-(p+q) \tau_a^- + \Phi(p) X_{\tau_a^-} }; \tau_a^- < \tau_b^+ \right] = \mathrm{e}^{\Phi(p)a}\mathcal{H}^\PAR{p,q}(x-a) - \frac{\mathcal{H}^\PAR{p,q}(b-a)}{W^{(p+q)}(b-a)} \mathrm{e}^{\Phi(p)a} W^{(p+q)}(x-a) .
\end{equation*}
Moreover, for  $x,a \in \reals$, $p\geq0$ and $q>0$, we have 
\begin{equation*}
\e_x \left[ \mathrm{e}^{-(p+q) \tau_a^- + \Phi(p) X_{\tau_a^-} }; \tau_a^- < \infty \right] =  \mathrm{e}^{\Phi(p) a} \mathcal{H}^\PAR{p,q}(x-a) - {q \over \Phi(p+q)-\Phi(p)} \mathrm{e}^{\Phi(p) a} \pqscale(x-a)
\end{equation*}
and, when $q \to 0$, we get
\begin{equation*}
\e_x \left[ \mathrm{e}^{-p \tau_a^- + \Phi(p) X_{\tau_a^-} }; \tau_a^- < \infty \right] =  \mathrm{e}^{\Phi(p)x} - \psi'(\Phi(p)) \mathrm{e}^{\Phi(p)a} \pscale(x-a).
\end{equation*}
\end{lemma}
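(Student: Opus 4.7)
The strategy is to reduce everything to the classical two-sided downward exit identity \eqref{E:exitbelow} applied under the Esscher transform with parameter $c=\Phi(p)$. Recall that under $\mathbb{P}_x^{\Phi(p)}$ the process $X$ is again spectrally negative with Laplace exponent $\psi_{\Phi(p)}$, $\Phi$-inverse $\Phi_{\Phi(p)}(q)=\Phi(p+q)-\Phi(p)$, and scale functions $W^{(q)}_{\Phi(p)}$ and $Z^{(q)}_{\Phi(p)}$ linked to the originals through \eqref{E:scale_measure_change} and \eqref{eq:HZ}.

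\textbf{Step 1 (two-sided version).} By the definition of the change of measure and optional stopping, for $a\le x\le b$,
\[
\e_x\!\left[\mathrm{e}^{-(p+q)\tau_a^-+\Phi(p)X_{\tau_a^-}};\tau_a^-<\tau_b^+\right]=\mathrm{e}^{\Phi(p)x}\,\e_x^{\Phi(p)}\!\left[\mathrm{e}^{-q\tau_a^-};\tau_a^-<\tau_b^+\right].
\]
I then apply \eqref{E:exitbelow} under $\mathbb{P}^{\Phi(p)}$ to rewrite the conditional expectation as $Z^{(q)}_{\Phi(p)}(x-a)-\frac{Z^{(q)}_{\Phi(p)}(b-a)}{W^{(q)}_{\Phi(p)}(b-a)}W^{(q)}_{\Phi(p)}(x-a)$, and finally use the identities $\mathrm{e}^{\Phi(p)x}Z^{(q)}_{\Phi(p)}(x-a)=\mathrm{e}^{\Phi(p)a}\mathcal{H}^\PAR{p,q}(x-a)$ and $\mathrm{e}^{\Phi(p)x}W^{(q)}_{\Phi(p)}(x-a)=\mathrm{e}^{\Phi(p)a}W^{(p+q)}(x-a)$ to read off the first claim. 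For $x<a$ the identity holds trivially because $\tau_a^-=0$, $W^{(p+q)}(x-a)=0$, and $\mathcal{H}^\PAR{p,q}(x-a)=\mathrm{e}^{\Phi(p)(x-a)}$, both sides collapsing to $\mathrm{e}^{\Phi(p)x}$.

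\textbf{Step 2 (unrestricted version).} I let $b\to\infty$ in the first identity. The only nontrivial limit is $\mathcal{H}^\PAR{p,q}(b-a)/W^{(p+q)}(b-a)$, which by \eqref{E:HoverW} with $x=0$ reduces to $\lim_{c\to\infty}Z^{(q)}_{\Phi(p)}(c)/W^{(q)}_{\Phi(p)}(c)=q/\Phi_{\Phi(p)}(q)=q/(\Phi(p+q)-\Phi(p))$. Monotone convergence justifies the interchange of limit and expectation (for $q>0$ the integrand is bounded and the event $\{\tau_a^-<\tau_b^+\}$ increases to $\{\tau_a^-<\infty\}$), yielding the second identity.

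\textbf{Step 3 ($q\to 0$ limit).} Differentiating $\psi(\Phi(p+q))=p+q$ at $q=0$ gives $\Phi'(p)=1/\psi'(\Phi(p))$, hence $q/(\Phi(p+q)-\Phi(p))\to\psi'(\Phi(p))$ as $q\downarrow 0$; moreover $\mathcal{H}^\PAR{p,0}(x-a)=\mathrm{e}^{\Phi(p)(x-a)}$ by \eqref{eq:H}, so the right-hand side of the second identity converges to $\mathrm{e}^{\Phi(p)x}-\psi'(\Phi(p))\mathrm{e}^{\Phi(p)a}W^{(p)}(x-a)$. Dominated convergence on the left (the integrand is bounded uniformly by $1$) delivers the third formula.

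The only mild subtlety is bookkeeping through the Esscher transform and the translation by $a$; the exit identity \eqref{E:exitbelow} and the scale-function conversion \eqref{E:scale_measure_change} do all of the analytic work, so no genuine obstacle is expected.
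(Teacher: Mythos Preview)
Your argument is correct and is precisely the standard route: Esscher-transform with parameter $\Phi(p)$, apply the two-sided exit identity \eqref{E:exitbelow} under $\mathbb{P}^{\Phi(p)}$, convert back via \eqref{E:scale_measure_change} and \eqref{eq:HZ}, then let $b\to\infty$ and finally $q\to 0$. The paper itself does not supply a proof of this lemma; it simply states that the result is well known (citing \cite{kyprianou2006,landriaultetal2014}) and records it in the $\mathcal{H}^{(p,q)}$ notation, so there is nothing to compare against beyond noting that your derivation matches the canonical one in those references. One cosmetic point: in Steps~2 and~3 the integrand is bounded not by $1$ but by $\mathrm{e}^{\Phi(p)a}$ (since $X_{\tau_a^-}\le a$); this does not affect the dominated/monotone convergence arguments.
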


The next lemma is an immediate consequence of Lemma 2.2 in \cite{loeffenetal2014}.
\begin{lemma}\label{lem:techlemma2}
Let $p,q \geq 0$. For $a \leq b$ and $x,y \in \dR$, we have
\begin{multline*}
\e_x \left[ \mathrm{e}^{-p\tau_a^-} W^{(q)}(X_{\tau_a^-} -y) ; \tau_a^-<\tau_b^+ \right] = W^{(p)}(x-y)+(q-p)\int_{0}^{a-y} \pscale(x-y-z) W^{(q)}(z)\mathrm{d}z \\
-{\pscale(x-a) \over \pscale(b-a)} \PAR{W^{(p)}(b-y) + (q-p) \int_{0}^{a-y} \pscale(b-y-z) W^{(q)}(z) \mathrm{d}z} ,
\end{multline*}
and
\begin{multline*}
\e_x \left[ \mathrm{e}^{-p\tau_a^-} \qscale(X_{\tau_a^-}-y) ; \tau_a^-<+\infty \right] \\
= \pscale(x-y) + (q-p) \int_{0}^{a-y} \pscale(x-y-z) \qscale(z) \mathrm{d}z \\
- \pscale(x-a) \cH^\PAR{p,q-p}(a-y).
\end{multline*}
\end{lemma}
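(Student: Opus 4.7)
The strategy is to read the first identity directly off Lemma 2.2 of \cite{loeffenetal2014}, and to deduce the second by sending $b\to\infty$ in the first. By the spatial homogeneity of $X$, translating the process and all levels by $-y$ turns
\[
\e_x\SBRA{\mathrm{e}^{-p\tau_a^-}W^{(q)}(X_{\tau_a^-}-y);\tau_a^-<\tau_b^+}
\]
into the canonical expectation $\e_{x-y}\SBRA{\mathrm{e}^{-p\tau_{a-y}^-}W^{(q)}(X_{\tau_{a-y}^-});\tau_{a-y}^-<\tau_{b-y}^+}$, which is exactly the quantity evaluated in [LRZ14, Lem.~2.2]; substituting back gives the first display. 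This step is essentially notational.

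For the second identity I would let $b\to\infty$ in the first. Monotone convergence on the left gives $\e_x\SBRA{\mathrm{e}^{-p\tau_a^-}W^{(q)}(X_{\tau_a^-}-y);\tau_a^-<\infty}$. On the right, the terms $W^{(p)}(x-y)+(q-p)\int_0^{a-y}\pscale(x-y-z)W^{(q)}(z)\,\dd z$ do not depend on $b$, so the whole problem reduces to computing
\[
\lim_{b\to\infty}\Lambda(b),\qquad \Lambda(b):=\frac{W^{(p)}(b-y)+(q-p)\int_0^{a-y}\pscale(b-y-z)W^{(q)}(z)\,\dd z}{\pscale(b-a)} ,
\]
and identifying it with $\cH^{(p,q-p)}(a-y)$. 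Using the factorisation $\pscale(x)=\mathrm{e}^{\Phi(p)x}W_{\Phi(p)}(x)$ from \eqref{E:scale_measure_change} and the standard asymptotic $W_{\Phi(p)}(c)\to 1/\psi'(\Phi(p))$ as $c\to\infty$ (for $p>0$), one factors $\mathrm{e}^{\Phi(p)(b-y)}/\psi'(\Phi(p))$ out of the numerator of $\Lambda(b)$; dominated convergence applied on the compact interval $[0,a-y]$ takes the pointwise limit inside the integral and turns the residual bracket into $1+(q-p)\int_0^{a-y}\mathrm{e}^{-\Phi(p)z}W^{(q)}(z)\,\dd z$. By definition \eqref{eq:H}, this bracket equals $\mathrm{e}^{-\Phi(p)(a-y)}\cH^{(p,q-p)}(a-y)$, and dividing by $\pscale(b-a)\sim\mathrm{e}^{\Phi(p)(b-a)}/\psi'(\Phi(p))$ produces precisely $\cH^{(p,q-p)}(a-y)$, as required. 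Multiplying by the surviving prefactor $\pscale(x-a)$ and subtracting yields the second display.

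The only delicate point, and thus the main obstacle (really a minor one), is the dominated convergence step: one needs a $b$-uniform envelope for $\mathrm{e}^{-\Phi(p)(b-y)}\pscale(b-y-z)W^{(q)}(z)$ on $z\in[0,a-y]$, which follows from the exponential bound $\pscale(u)\leq C\,\mathrm{e}^{\Phi(p)u}$ valid for $u$ in any bounded-below set. The degenerate case $p=0$ with $\psi'(0+)\leq 0$ would be handled separately using the appropriate substitute for the asymptotic of $W^{(p)}$, as in Section~\ref{sec:scaleF}.
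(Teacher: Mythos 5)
Your proposal is correct and follows essentially the same route as the paper: reduce to the canonical case by spatial homogeneity, quote Lemma 2.2 of \cite{loeffenetal2014} for the two-sided identity, and let $b\to\infty$ using $\pscale(u)=\mathrm{e}^{\Phi(p)u}W_{\Phi(p)}(u)$ together with $W_{\Phi(p)}(c)\to 1/\psi'(\Phi(p))$ and dominated convergence on $[0,a-y]$ to identify the limit of the ratio with $\cH^{(p,q-p)}(a-y)$. The only point you compress is that Lemma 2.2 of \cite{loeffenetal2014} is stated with integrals of the form $\int_a^x\pscale(x-z)\qscale(z)\,\mathrm{d}z$, so passing to the form with $\int_0^{a-y}$ requires one application of identity \eqref{eq:sym} — a purely algebraic step that the paper carries out explicitly.
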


\begin{proof}
First, let's notice that, by spatial homogeneity of $X$, we have
\begin{align}\label{eq:trans1}
\e_x \left[ \mathrm{e}^{-p\tau_a^-} \qscale(X_{\tau_a^-}-y) ; \tau_a^-<\tau_b^+ \right]&=\e_{x-y} \left[ \mathrm{e}^{-p\tau_{a-y}^-} \qscale(X_{\tau_{a-y}^-}) ; \tau_{a-y}^-<\tau_{b-y}^+ \right],\\
\label{eq:trans2}
\e_x \left[ \mathrm{e}^{-p\tau_a^-} \qscale(X_{\tau_a^-}-y) ; \tau_a^-<+\infty \right]&=\e_{x-y}\left[ \mathrm{e}^{-p\tau_{a-y}^-} \qscale(X_{\tau_{a-y}^-}) ; \tau_{a-y}^-<+\infty \right].
\end{align}
So, it suffices to prove the lemma for the case $y=0$.

\medskip
Let $y=0$. We know from Lemma 2.2 in \cite{loeffenetal2014} that
\begin{multline*}
\e_x \left[ \mathrm{e}^{-p\tau_a^-} \qscale(X_{\tau_a^-}) ; \tau_a^-<\tau_b^+ \right] = \qscale(x)-(q-p)\int_{a}^{x} \pscale(x-z)\qscale(z)\mathrm{d}z\\-{\pscale(x-a)\over \pscale(b-a)}\PAR{\qscale(b)-(q-p)\int_{a}^{b}\pscale(b-z)\qscale(z)\mathrm{d}z}. 
\end{multline*}
Using Relation \eqref{eq:sym},
the expression becomes:
\begin{multline*}
\e_x \left[ \mathrm{e}^{-p\tau_a^-} \qscale(X_{\tau_a^-}) ; \tau_a^-<\tau_b^+ \right] = \pscale(x)+(q-p)\int_{0}^{a} \pscale(x-z)\qscale(z)\mathrm{d}z\\-{\pscale(x-a)\over \pscale(b-a)}\PAR{\pscale(b)+(q-p)\int_{0}^{a}\pscale(b-z)\qscale(z)\mathrm{d}z}. 
\end{multline*}

Since $\e_x \left[ \mathrm{e}^{-p\tau_a^-} \qscale(X_{\tau_a^-} ) ; \tau_a^-<\infty \right] =\lim_{b\rightarrow+\infty}\e_x \left[ \mathrm{e}^{-p\tau_a^-} \qscale(X_{\tau_a^-}) ; \tau_a^-<\tau_b^+ \right]$, ${\pscale(b)\over \pscale(b-a)} = \mathrm{e}^{\Phi(p) a}{W_{\Phi(p)}(b)\over W_{\Phi(p)}(b-a)}$ and ${\pscale(b-z)\over \pscale(b-a)} = \mathrm{e}^{\Phi(p)(a-z)}{W_{\Phi(p)}(b-z)\over W_{\Phi(p)}(b-a)}$, by  Lebesgue's dominated convergence theorem we have
\begin{multline*}
\e_x \left[ \mathrm{e}^{-p\tau_a^-} \qscale(X_{\tau_a^-} ) ; \tau_a^-<+\infty \right] = \pscale(x)+(q-p)\int_{0}^{a} \pscale(x-z)\qscale(z)\mathrm{d}z\\- \mathrm{e}^{\Phi(p)(a)}\pscale(x-a)\PAR{1+(q-p)\int_{0}^{a} \mathrm{e}^{-\Phi(p)z}\qscale(z)\mathrm{d}z}. 
\end{multline*}
\end{proof}
Using the same tools as in the previous lemma, we also have the following result for $\zqscale$.
\begin{lemma}\label{lem:techlemma2b}
Let $p,q \geq 0$. For $a \leq b$ and $x,y \in \dR$, we have
\begin{multline*}
\e_x \left[ \mathrm{e}^{-p\tau_a^-} \zqscale(X_{\tau_a^-} -y) ; \tau_a^-<\tau_b^+ \right] = \zqscale(x-y)+(q-p)\int_{0}^{a-y} \pscale(x-y-z)\zqscale(z)\mathrm{d}z \\
-{\pscale(x-a)\over \pscale(b-a)}\PAR{\zqscale(b-y)+(q-p)\int_{0}^{a-y}\pscale(b-y-z) \zqscale(z)\mathrm{d}z}. 
\end{multline*}
and
%
\begin{multline*}
\e_x \left[ \mathrm{e}^{-p\tau_a^-} \zqscale(X_{\tau_a^-}-y) ; \tau_a^-<+\infty \right] \\
= \zpscale(x-y) + (q-p) \int_{0}^{a-y} \pscale(x-y-z) \zqscale(z) \mathrm{d}z \\
- \pscale(x-a) \mathrm{e}^{\Phi(p) (a-y)} \left[ \frac{p}{\Phi(p)} + (q-p) \int_0^{a-y} \mathrm{e}^{\Phi(p) z} \zqscale(z) \mathrm{d}z \right] ,
\end{multline*}
where $\lim_{p \to 0} \frac{p}{\Phi(p)} \to \psi'(0) \vee 0$ in the case $p=0$.
\end{lemma}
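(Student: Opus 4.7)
The plan is to imitate the proof of Lemma~\ref{lem:techlemma2}: reduce by spatial homogeneity, express $Z^{(q)}$ as an integral of $W^{(q)}$, exchange with the expectation via Fubini, and apply the already established formula of Lemma~\ref{lem:techlemma2}. First, by spatial homogeneity (exactly as in \eqref{eq:trans1}--\eqref{eq:trans2} applied with $Z^{(q)}$ in place of $W^{(q)}$), I may reduce to the case $y=0$. Then, since $W^{(q)} \equiv 0$ on $(-\infty,0)$ and $X_{\tau_a^-} < a$ almost surely on $\{\tau_a^- < \infty\}$, the substitution $u = X_{\tau_a^-}-s$ yields the key representation
\[
Z^{(q)}(X_{\tau_a^-}) \;=\; 1 + q\int_0^{X_{\tau_a^-}} W^{(q)}(u)\,\mathrm{d}u \;=\; 1 + q\int_0^{a} W^{(q)}(X_{\tau_a^-}-s)\,\mathrm{d}s,
\]
where the extension of the integration range is permitted by the vanishing of $W^{(q)}$ on the negatives.

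Taking expectations and exchanging integrations via Fubini, I would obtain
\[
\e_x\!\left[\mathrm{e}^{-p\tau_a^-} Z^{(q)}(X_{\tau_a^-});\tau_a^-<\tau_b^+\right] = \e_x\!\left[\mathrm{e}^{-p\tau_a^-};\tau_a^-<\tau_b^+\right] + q\!\int_0^a \e_x\!\left[\mathrm{e}^{-p\tau_a^-} W^{(q)}(X_{\tau_a^-}-s);\tau_a^-<\tau_b^+\right]\mathrm{d}s.
\]
The first summand is provided by \eqref{E:exitbelow}, and the integrand of the second by Lemma~\ref{lem:techlemma2} (with $y$ replaced by $s$). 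After plugging these in, I would swap the orders of integration in the resulting double integrals (a convenient inner substitution being $t = s+z$, converting them to integrals against $W^{(p)}(x-t)$) and recognise the cumulative integrals of $qW^{(q)}$ as values of $Z^{(q)}$. A handy simplification along the way is the convolution identity
\[
(q-p)\int_0^w W^{(p)}(w-z)\,Z^{(q)}(z)\,\mathrm{d}z = Z^{(q)}(w) - Z^{(p)}(w),
\]
obtained by integrating the symmetry relation \eqref{eq:sym} in its first variable from $0$ to $w$ and applying Fubini.

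The second formula (for $\tau_a^-<\infty$) I would deduce by letting $b\to\infty$ in the two-sided statement. This uses dominated convergence together with the asymptotics $W^{(p)}(b-s)/W^{(p)}(b-a) \to \mathrm{e}^{\Phi(p)(a-s)}$, which follows from \eqref{E:scale_measure_change} and $W_{\Phi(p)}(\infty) = 1/\psi'(\Phi(p))$, and $Z^{(p)}(b-a)/W^{(p)}(b-a) \to p/\Phi(p)$ (to be interpreted as $\psi'(0)\vee 0$ when $p=0$). The main obstacle will be the bookkeeping in the reassembly step: several single and double integrals, carrying factors of $p$, $q$, and $q-p$, need to collapse cleanly into the compact stated form, and keeping track of which term contributes which factor (in particular ensuring that all residual $p^{-1}$ factors cancel so the $p=0$ case is covered) is delicate but mechanical.
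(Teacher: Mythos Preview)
Your plan is sound and will produce the result. The paper's own proof is merely the one-line remark that ``the same tools as in the previous lemma'' apply, which in the pattern of Lemma~\ref{lem:techlemma2} means: reduce to $y=0$ by spatial homogeneity, quote (or re-derive directly from \cite{loeffenetal2014}) the analogous identity with $Z^{(q)}$ in place of $W^{(q)}$, rewrite via an integrated version of \eqref{eq:sym}, and send $b\to\infty$ using the asymptotics of $W^{(p)}$. Your route differs at the core step: rather than invoking an external $Z^{(q)}$-identity, you bootstrap from the already-proven Lemma~\ref{lem:techlemma2} via the representation $Z^{(q)}(X_{\tau_a^-})=1+q\int_0^a W^{(q)}(X_{\tau_a^-}-s)\,\mathrm{d}s$ and Fubini. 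This is more self-contained (no appeal beyond what is already in the paper) at the price of the extra reassembly you anticipate; both approaches are short.

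One caveat that is about the statement, not your argument: your computation will naturally land on $Z^{(p)}(x-y)+(q-p)\int_0^{a-y}W^{(p)}(x-y-z)Z^{(q)}(z)\,\mathrm{d}z$ in the first display (exactly parallel to the $W^{(p)}(x-y)$ in Lemma~\ref{lem:techlemma2} and to the $Z^{(p)}(x-y)$ in the second display here). The printed $Z^{(q)}(x-y)$ with a $+\int_0^{a-y}$ is inconsistent --- set $q=0$ to see the mismatch --- so do not be alarmed when your answer differs by $(q-p)\int_0^{x-y}W^{(p)}(x-y-z)Z^{(q)}(z)\,\mathrm{d}z$. Likewise, your $b\to\infty$ limit will give $\mathrm{e}^{-\Phi(p)z}$ inside the final bracket (as in the $\mathcal{H}^{(p,q-p)}$ term of Lemma~\ref{lem:techlemma2}), not $\mathrm{e}^{\Phi(p)z}$; this sign is immaterial in the one place the lemma is applied (Lemma~\ref{lem:techlemma3}, where the role of $p$ is played by $0$).
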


%

\begin{lemma}\label{lem:techlemma3}
For all $a,x\in\reals$, $p\geq0$ and $p+q\geq0$,
\begin{multline*}
\e_x \left[ \mathrm{e}^{-p \tau_a^-} \mathcal{H}^\PAR{p,q} \left( X_{\tau_a^-} \right) ; \tau_a^- < \infty \right] \\
= \mathcal{H}^\PAR{p,q}(x) - q \int_a^x \pscale(x-z) \mathcal{H}^\PAR{p,q}(z) \mathrm{d}z \\
- \pscale(x-a) \mathrm{e}^{\Phi(p)a} \left( \psi'(\Phi(p)) + q \int_0^a \mathrm{e}^{-\Phi(p) z} \mathcal{H}^\PAR{p,q}(z) \mathrm{d}z \right).
\end{multline*}
\end{lemma}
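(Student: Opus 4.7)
The plan is to reduce the identity for $\mathcal{H}^{\PAR{p,q}}$ to the already-proven $Z^{(q)}$-analogue (Lemma \ref{lem:techlemma2b}) via the Esscher transform with parameter $c=\Phi(p)$. Since $\psi(\Phi(p))=p$, an application of the density relation to the stopping time $\tau_a^-$ on $\{\tau_a^-<\infty\}$ yields, for any non-negative $f$,
\[
\e_x\!\left[\mathrm{e}^{-p\tau_a^-} f(X_{\tau_a^-});\tau_a^-<\infty\right]=\mathrm{e}^{\Phi(p)x}\,\e_x^{\Phi(p)}\!\left[\mathrm{e}^{-\Phi(p)X_{\tau_a^-}} f(X_{\tau_a^-});\tau_a^-<\infty\right].
\]
Taking $f=\mathcal{H}^{\PAR{p,q}}$ and invoking the representation $\mathcal{H}^{\PAR{p,q}}(z)=\mathrm{e}^{\Phi(p)z}Z^{(q)}_{\Phi(p)}(z)$ from \eqref{eq:HZ}, the left-hand side of the lemma reduces to $\mathrm{e}^{\Phi(p)x}\,\e_x^{\Phi(p)}\bigl[Z^{(q)}_{\Phi(p)}(X_{\tau_a^-});\tau_a^-<\infty\bigr]$.

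\textbf{Applying Lemma \ref{lem:techlemma2b} under $\p_x^{\Phi(p)}$.} Under the tilted law, $X$ is a SNLP with Laplace exponent $\psi_{\Phi(p)}$, right-inverse $\Phi_{\Phi(p)}(0)=0$ and mean drift $\psi'_{\Phi(p)}(0+)=\psi'(\Phi(p))>0$; moreover, \eqref{E:scale_measure_change} specializes to $W^{(0)}_{\Phi(p)}(z)=\mathrm{e}^{-\Phi(p)z}\pscale(z)$. Applying Lemma \ref{lem:techlemma2b} with its ``$p$'' set to $0$, its ``$q$'' to $q$ and $y=0$, then multiplying through by $\mathrm{e}^{\Phi(p)x}$ and using $\mathrm{e}^{\Phi(p)z}Z^{(q)}_{\Phi(p)}(z)=\mathcal{H}^{\PAR{p,q}}(z)$ inside each integral, I would obtain
\[
\e_x\!\left[\mathrm{e}^{-p\tau_a^-}\mathcal{H}^{\PAR{p,q}}(X_{\tau_a^-});\tau_a^-<\infty\right]=\mathrm{e}^{\Phi(p)x}+q\!\int_0^a\!\pscale(x-z)\mathcal{H}^{\PAR{p,q}}(z)\,\dd z-\mathrm{e}^{\Phi(p)a}\pscale(x-a)\!\left[\psi'(\Phi(p))+q\!\int_0^a\!\mathrm{e}^{-\Phi(p)z}\mathcal{H}^{\PAR{p,q}}(z)\,\dd z\right].
\]

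\textbf{Matching the stated form.} Identity \eqref{E:hronde}, applied at $a=0$, reads $\mathcal{H}^{\PAR{p,q}}(x)=\mathrm{e}^{\Phi(p)x}+q\int_0^x\pscale(x-z)\mathcal{H}^{\PAR{p,q}}(z)\,\dd z$, which rearranges to $\mathrm{e}^{\Phi(p)x}+q\int_0^a\pscale(x-z)\mathcal{H}^{\PAR{p,q}}(z)\,\dd z=\mathcal{H}^{\PAR{p,q}}(x)-q\int_a^x\pscale(x-z)\mathcal{H}^{\PAR{p,q}}(z)\,\dd z$. Substituting this into the formula above produces exactly the claimed right-hand side. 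The main obstacle I anticipate is twofold: first, the boundary case $p=0$, where the Esscher transform degenerates if $\Phi(0)=0$, which I would handle by establishing the identity for $p>0$ and letting $p\downarrow 0$, observing that the convention $\lim_{p\downarrow 0}p/\Phi(p)=\psi'(0)\vee 0$ built into Lemma \ref{lem:techlemma2b} matches the tilted-drift factor in the limit; and second, careful bookkeeping of the $\mathrm{e}^{\pm\Phi(p)z}$ factors when the prefactor $\mathrm{e}^{\Phi(p)x}$ is distributed across the integrals, so that $Z^{(q)}_{\Phi(p)}(z)$ gets paired with the correct exponential to produce either $\mathcal{H}^{\PAR{p,q}}(z)$ or $\mathrm{e}^{-\Phi(p)z}\mathcal{H}^{\PAR{p,q}}(z)$ as required.
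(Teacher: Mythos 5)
Your proof is correct and follows essentially the same route as the paper: rewrite $\mathcal{H}^{(p,q)}(X_{\tau_a^-})$ as $\mathrm{e}^{\Phi(p)X_{\tau_a^-}}Z^{(q)}_{\Phi(p)}(X_{\tau_a^-})$, absorb $\mathrm{e}^{-p\tau_a^-+\Phi(p)X_{\tau_a^-}}$ into the Esscher change of measure with parameter $\Phi(p)$, apply Lemma \ref{lem:techlemma2b} with its $p$ set to $0$ to the tilted process (using $\Phi_{\Phi(p)}(0)=0$ and $\psi'_{\Phi(p)}(0)=\psi'(\Phi(p))$), and convert back via \eqref{E:scale_measure_change} and \eqref{E:hronde}. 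The only difference is cosmetic: you make explicit the rearrangement via \eqref{E:hronde} and the $p=0$ boundary case, which the paper leaves implicit.
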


\begin{proof}

First, using \eqref{eq:HZ}, we note that
\begin{align*}
\e_x \left[ \mathrm{e}^{-p \tau_a^-} \mathcal{H}^\PAR{p,q} \left( X_{\tau_a^-} \right) ; \tau_a^- < \infty \right] &= \e_x \left[ \mathrm{e}^{-p \tau_a^- + \Phi(p) X_{\tau_a^-}} Z_{\Phi(p)}^{(q)} \left( X_{\tau_a^-} \right) ; \tau_a^- < \infty \right] \\
&= \mathrm{e}^{\Phi(p) x} \e_x^{\Phi(p)} \left[ Z_{\Phi(p)}^{(q)} \left( X_{\tau_a^-} \right) ; \tau_a^- < \infty \right] \\
&= \mathrm{e}^{\Phi(p) x} \e_x \left[ Z_{\Phi(p)}^{(q)} \left( Y_{ \nu_a^-} \right) ; \nu_a^- < \infty \right] ,
\end{align*}
where $Y$ is the SNLP obtained from $X$ by the change of measure with coefficient $\Phi(p)$ and $\nu_a^- = \inf \{t > 0 \colon Y_t < a \}$. Therefore, we can apply Lemma~\ref{lem:techlemma2b} and write
\begin{multline*}
\e_x \left[ \mathrm{e}^{-p \tau_a^-} \mathcal{H}^\PAR{p,q} \left( X_{\tau_a^-} \right) ; \tau_a^- < \infty \right] \\
= \zscale_{\Phi(p)}(x) + q \int_{0}^{a} \scale_{\Phi(p)}(x-y) \zqscale_{\Phi(p)}(y) \mathrm{d}y \\
- \scale_{\Phi(p)}(x-a) \mathrm{e}^{\Phi_{\Phi(p)}(0) a} \left[ \psi_{\Phi(p)}'(0) + q \int_0^{a} \mathrm{e}^{\Phi_{\Phi(p)}(0) y} \zqscale_{\Phi(p)}(y) \mathrm{d}y \right] .
\end{multline*}
since $\psi_{\Phi(p)}'(0) = \psi'(\Phi(p)) \geq 0$ and $\Phi_{\Phi(p)}(0)=0$. The result follows from the discussion at the end of Section \ref{sec:scaleF}.

\end{proof}

\section{Proofs of the main results}

\subsection{Proof of Theorem~\ref{main} when $X$ is of BV}

First, we assume that $X$ has paths of bounded variation (BV).\\
%


In order to simplify the manipulations, we introduce the following (temporary) quantities:
\begin{align*}
A(x) &:= \dE_x\SBRA{ \mathrm{e}^{-p\tau_a^-} \dE_{X_{\tau_a^-}}\SBRA{ \mathrm{e}^{-(p+q)\tau_0^-+\Phi(p)X_{\tau_0^-}};\tau_0^-< \tau_a^+}},\\
B(x;a) &:= \dE_x\SBRA{ \mathrm{e}^{-p\tau_a^-}\pqscale(X_{\tau_a^-})},\\
C(x,\dy) &:= \dE_x\SBRA{ \mathrm{e}^{-p\tau_a^-}\U{p+q}{X_{\tau_a^-}}{a}},\\
D(x,\dy) &:= \dE_x\SBRA{ \mathrm{e}^{-p\tau_a^-}\dE_{X_{\tau_a^-}}\SBRA{ \mathrm{e}^{-(p+q)\tau_0^-}\Up{p}{X_{\tau_0^-}}{0};\tau_0^-< \tau_a^+}} ,
\end{align*}
where, as mentioned previously, all these expectations are taken over the set $\{\tau_a^- < \infty \}$. At this point, let's note that these quantities can be made more explicit (in terms of scale functions) using results from Section~\ref{section:tech}; this will be done later on.\\

Then, using Lemma~\ref{lem:vgeneral}, we can write, for all $x\in\dR$,
\begin{multline}\label{eq:vsimple0}
v(x,\mathrm{d}y )=A(x)v(0,\mathrm{d}y )+{B(x;a)\over \pqscale(a)}v(a,\mathrm{d}y ) \\
+\Um{p}{x}{a}\ind_{y>a}+C(x,\dy)\ind_{y\in[0,a]} + D(x,\dy)\ind_{y<0} .
\end{multline}
Therefore, the quantities $v(0,\mathrm{d}y )$ and $v(a,\mathrm{d}y )$ satisfy the following $2 \times 2$ linear system: 
\begin{equation*}
\begin{cases}
\PAR{1-A(0)}v(0,\mathrm{d}y )-{\pqscale(0)\over\pqscale(a)}v(a,\mathrm{d}y ) =C(0,\dy)\ind_{y\in[0,a]}
+D(0,\dy)\ind_{y<0} , \\[0.05in]
-A(a)v(0,\mathrm{d}y )+\PAR{1-{B(a;a)\over \pqscale(a)}}v(a,\mathrm{d}y ) = \Um{p}{a}{a}\ind_{y>a}+C(a,\dy)\ind_{y\in[0,a]}+D(a,\dy)\ind_{y<0}.
\end{cases}
\end{equation*}
Our aim is to exhibit the values of $v(0,\dy)$ and $v(a,\dy)$ from this linear system using results of Section \ref{section:tech} in order to get an explicit expression of $v(x,\dy)$.

From 
Lemma \ref{lem:techlemma1}, we notice that
\begin{equation}\label{eq:A}
A(x)=\dE_x\SBRA{ \mathrm{e}^{-p\tau_a^-}\cH^\PAR{p,q}(X_{\tau_a^-})}-{\cH^\PAR{p,q}(a)\over \pqscale(a)}B(x;a),
\end{equation}
and from
Lemma~\ref{lem:techlemma2}, we have
\begin{equation}\label{eq:B}
B(x;a)=
\pscale(x)+q\int_{0}^{a} \pscale(x-z)\pqscale(z)\mathrm{d}z-\pscale(x-a)\cH^\PAR{p,q}(a).
\end{equation}
We deduce that
\begin{equation}\label{eq:A0}
A(0)=1-{\pqscale(0)\over\pqscale(a)}\cH^\PAR{p,q}(a),
\end{equation}
and, using Equation~\eqref{eq:sym}, that
\begin{equation}\label{eq:Ba}
B(a;a)=\pqscale(a)-\pscale(0)\cH^\PAR{p,q}(a) .
\end{equation}

Then, since $\pqscale(0) > 0$, the linear system can be written as
\begin{equation*}
\begin{cases}
\cH^\PAR{p,q}(a)v(0,\mathrm{d}y )-v(a,\mathrm{d}y )={\pqscale(a)\over\pqscale(0)}\SBRA{C(0,\dy)\ind_{y\in[0,a]}
+D(0,\dy)\ind_{y<0}} ,\\[0.05in]
-A(a)v(0,\mathrm{d}y )+{\pscale(0)\over \pqscale(a)}\cH^\PAR{p,q}(a)v(a,\mathrm{d}y )=\Um{p}{a}{a}\ind_{y>a}+C(a,\dy)\ind_{y\in[0,a]}+D(a,\dy)\ind_{y<0}.
\end{cases}
\end{equation*}

The determinant of the matrix related to this linear system is equal to
\begin{equation}\label{eq:deltaA}
\Delta={ \pscale(0) \over \pqscale(a)}\PAR{\cH^\PAR{p,q}(a)}^2-A(a),
\end{equation}
with, from Expressions \eqref{eq:A}, \eqref{eq:Ba} and from Lemma \ref{lem:techlemma3}, 
\begin{align}\label{eq:Aa}
A(a)
&=- \pscale(0)\mathrm{e} ^{\Phi(p) a}\left( \psi'(\Phi(p)) + q \int_0^a \mathrm{e}^{-\Phi(p) z} \mathcal{H}^\PAR{p,q}(z) \mathrm{d}z \right)+{\pscale(0)\over \pqscale(a)}\PAR{\cH^\PAR{p,q}(a)}^2.
\end{align}

We deduce that
\begin{equation}\label{eq:delta}
\Delta= \pscale(0)\mathrm{e} ^{\Phi(p) a} \left( \psi'(\Phi(p)) + q \int_0^a \mathrm{e}^{-\Phi(p) z} \mathcal{H}^\PAR{p,q}(z) \mathrm{d}z \right).
\end{equation}
Since $\psi$ is increasing on $[\Phi(0),+\infty)$ and $\pscale(0)=1/c>0$ when $X$ has paths of bounded variation, the determinant $\Delta$ is not equal to zero and there is a unique solution to the linear system satisfied by $v(0,\mathrm{d}y )$ and $v(a,\mathrm{d}y )$. To solve the system we discuss according to the value of $y$.

We first notice, using Equation \eqref{eq:A}  and the first equation of the linear system, that Equation \eqref{eq:vsimple0} can be written in the following way
\begin{multline}\label{eq:vsimple}
v(x,\mathrm{d}y )=\dE_x\SBRA{ \mathrm{e}^{-p\tau_a^-}\cH^\PAR{p,q}(X_{\tau_a^-})} v(0,\dy)
+\Um{p}{x}{a}\ind_{y>a}\\
+\SBRA{C(x,\dy)-{B(x;a)\over \pqscale(0)}C(0,\dy)}\ind_{y\in[0,a]} + \SBRA{D(x,\dy)-{B(x;a)\over \pqscale(0)}D(0,\dy)}\ind_{y<0},
\end{multline}
with, by Lemma \ref{lem:techlemma3},
\begin{equation}\label{eq:EH}
\dE_x\SBRA{ \mathrm{e}^{-p\tau_a^-}\cH^\PAR{p,q}(X_{\tau_a^-})}= \mathcal{H}^\PAR{p,q}(x) - q \int_a^x \pscale(x-z) \mathcal{H}^\PAR{p,q}(z) \mathrm{d}z  - {\pscale(x-a)\over \pscale(0)} \Delta.
\end{equation}
\medskip

{\bf Case 1}: We first consider  $y>a$. The linear system satisfied by $v(0,\mathrm{d}y )$ and $v(a,\mathrm{d}y )$ is then
\begin{equation*}
\begin{cases}
\cH^\PAR{p,q}(a)v(0,\mathrm{d}y )-v(a,\mathrm{d}y )=0 ,\\[0.05in]
-A(a)v(0,\mathrm{d}y )+{\pscale(0)\over \pqscale(a)}\cH^\PAR{p,q}(a)v(a,\mathrm{d}y) =\Um{p}{a}{a}.
\end{cases}
\end{equation*}
We deduce that $v(a,\mathrm{d}y )=\cH^\PAR{p,q}(a)v(0,\mathrm{d}y )$, $
v(0,\mathrm{d}y )=\Delta^{-1}\Um{p}{a}{a}$. 
Consequently, from the expression of $v(x,\mathrm{d}y )$ given by \eqref{eq:vsimple}, we finally have for $y>a$
\begin{equation*}
v(x,\mathrm{d}y )=\Delta^{-1}\Um{p}{a}{a}\dE_x\SBRA{ \mathrm{e}^{-p\tau_a^-}\cH^\PAR{p,q}(X_{\tau_a^-})}+\Um{p}{x}{a},
\end{equation*}
with $\Um{p}{.}{a}$ given by Equation \eqref{eq:Um} and $\dE_x\SBRA{ \mathrm{e}^{-p\tau_a^-}\cH^\PAR{p,q}(X_{\tau_a^-})}$ given by \eqref{eq:EH}. We finally have, for $y>a$,
\begin{multline*}
v(x,\mathrm{d}y )=-\pscale(x-y)\mathrm{d}y\\
+\Delta^{-1}\pscale(0) \mathrm{e}^{\Phi(p)(a-y)}\left( \mathcal{H}^\PAR{p,q}(x) - q \int_a^x \pscale(x-z) \mathcal{H}^\PAR{p,q}(z) \mathrm{d}z \right)\mathrm{d}y.
\end{multline*}

\bigskip
{\bf Case 2}: We now consider  $y\in[0,a]$, then the linear system satisfied by $v(0,\mathrm{d}y )$ and $v(a,\mathrm{d}y )$ becomes
\begin{equation*}
\begin{cases}
\cH^\PAR{p,q}(a)v(0,\mathrm{d}y )-v(a,\mathrm{d}y )={\pqscale(a)\over \pqscale(0)}\,C(0,\dy) ,\\[0.05in]
-A(a)v(0,\mathrm{d}y )+{\pscale(0)\over \pqscale(a)}\cH^\PAR{p,q}(a)v(a,\mathrm{d}y ) =C(a,\dy) .
\end{cases}
\end{equation*}
and its  unique solution is given by
\begin{align*}
v(0,\mathrm{d}y )&=\Delta^{-1}\PAR{\cH^\PAR{p,q}(a)C(0,\dy)+C(a,\dy)} ,\\[0.1in]
v(a,\mathrm{d}y )&=\Delta^{-1}\PAR{A(a){\pqscale(a)\over \pqscale(0)}C(0,\dy)+\cH^\PAR{p,q}(a)C(a,\dy)}.
\end{align*}

Let us now study $C(x,\dy)$. From Equation~\eqref{eq:U}, we get
$$
C(x,\dy) =\BRA{{\pqscale(a-y)\over \pqscale(a)}\dE_x\SBRA{ \mathrm{e}^{-p\tau_a^-}\pqscale(X_{\tau_a^-})}-\dE_x\SBRA{ \mathrm{e}^{-p\tau_a^-}\pqscale(X_{\tau_a^-}-y)}}\dy ,
$$
from which we deduce that
$$
C(0,\dy) = {\pqscale(0)\pqscale(a-y)\over \pqscale(a)}\dy .
$$
Introducing  the notation $B(.;.)$ and using Equation~\eqref{eq:Ba}, we get
\begin{align}\label{eq:C}
C(x,\dy)&=\BRA{{\pqscale(a-y)\over \pqscale(a)}B(x;a)-B(x-y;a-y)}\dy,\\
\label{eq:Ca}
C(a,\dy)&=\pscale(0)\PAR{\cH^\PAR{p,q}(a-y)-{\pqscale(a-y)\over \pqscale(a)}\cH^\PAR{p,q}(a)}\dy.
\end{align}

We deduce that
\[
v(x,\mathrm{d}y )=v(0,\mathrm{d}y)\dE_x\SBRA{ \mathrm{e}^{-p\tau_a^-}\cH^\PAR{p,q}(X_{\tau_a^-})} -  B(x-y;a-y) \mathrm{d}y ,
\]
with
$
v(0,\mathrm{d}y)=\Delta^{-1}\pscale(0)\cH^\PAR{p,q}(a-y) \mathrm{d}y .
$
So using Equations \eqref{eq:B} and \eqref{eq:EH}, we obtain for $y\in[0,a]$,
\begin{multline*}
v(x,\mathrm{d}y )=\left\{\Delta^{-1}\pscale(0)\cH^\PAR{p,q}(a-y)
\left( \mathcal{H}^\PAR{p,q}(x) - q \int_a^x \pscale(x-z) \mathcal{H}^\PAR{p,q}(z) \mathrm{d}z \right)\right.\\
\left.-
\pscale(x-y)-q\int_0^{a-y}\pscale(x-y-z)\pqscale(z)\mathrm{d}z \right\}\dy.
\end{multline*}

\bigskip
{\bf Case 3}: We finally consider the case where $y<0$. Then the linear system gives
\begin{equation*}
\begin{cases}
\cH^\PAR{p,q}(a)v(0,\mathrm{d}y )-v(a,\mathrm{d}y )={\pqscale(a)\over\pqscale(0)}D(0,\mathrm{d}y)\\[0.05in]
-A(a)v(0,\mathrm{d}y )+{\pscale(0)\over \pqscale(a)}\cH^\PAR{p,q}(a)v(a,\mathrm{d}y ) =D(a,\mathrm{d}y),
\end{cases}
\end{equation*}
and its unique solution is
\begin{align*}
v(0,\mathrm{d}y )&=\Delta^{-1}\PAR{\cH^\PAR{p,q}(a)D(0,\mathrm{d}y)+D(a,\mathrm{d}y)}\\
v(a,\mathrm{d}y )&=\Delta^{-1}\PAR{A(a){\pqscale(a)\over  \pqscale(0)}D(0,\mathrm{d}y)+\cH^\PAR{p,q}(a)D(a,\mathrm{d}y)}.
\end{align*}

Let us now study $D(x,\dy)$. 

From \eqref{eq:Up}, for $y<0$, we have  $\Up{p}{x}{0}=\BRA{ \mathrm{e}^{\Phi(p)x}\pscale(-y)-\pscale(x-y)}\dy $ for $x\leq a$ and then
\begin{multline*}
D(x,\dy)=\dE_x\SBRA{ \mathrm{e}^{-p\tau_a^-}\dE_{X_{\tau_a^-}}\SBRA{ \mathrm{e}^{-(p+q)\tau_0^-}\Up{p}{X_{\tau_0^-}}{0};\tau_0^-< \tau_a^+}}\\
=\BRA{\pscale(-y)A(x)-\dE_x\SBRA{ \mathrm{e}^{-p\tau_a^-}\dE_{X_{\tau_a^-}}\SBRA{ \mathrm{e}^{-(p+q)\tau_0^-}\pscale(X_{\tau_0^-}-y);\tau_0^-< \tau_a^+}}}\dy.
\end{multline*}

Using  Lemma \ref{lem:techlemma2} and $B(x-y;a-y)=\e_x \SBRA{\mathrm{e}^{-p\tau_a^-} \pqscale(X_{\tau_a^-} -y)}$, with an explicit form given by \eqref{eq:B}, we get
\begin{multline*}
\dE_x\SBRA{ \mathrm{e}^{-p\tau_a^-}\dE_{X_{\tau_a^-}}\SBRA{ \mathrm{e}^{-(p+q)\tau_0^-}\pscale(X_{\tau_0^-}-y);\tau_0^-< \tau_a^+}}\\
=B(x-y;a-y)-q\int_{0}^{-y}B(x-y-z;a-y-z)\pscale(z)\mathrm{d}z\\
-{B(x;a)\over \pqscale(a)}\PAR{\pqscale(a-y)-q\int_{0}^{-y}\pqscale(a-y-z)\pscale(z)\mathrm{d}z}.
\end{multline*}

From Equation \eqref{eq:A}, we deduce
\begin{multline*}
D(x,\dy)=\\
\left\{\pscale(-y)\dE_x\SBRA{ \mathrm{e}^{-p\tau_a^-}\cH^\PAR{p,q}(X_{\tau_a^-})}
-B(x-y;a-y)+q\int_{0}^{-y}B(x-y-z;a-y-z)\pscale(z)\mathrm{d}z\right.\\
\left.+{B(x;a)\over \pqscale(a)}\PAR{\pqscale(a-y)-q\int_{0}^{-y}\pqscale(a-y-z)\pscale(z)\mathrm{d}z-\pscale(-y)\cH^\PAR{p,q}(a)}\right\}\dy
\end{multline*}

We easily compute $
B(h;a+h)=\pqscale(h)$ for $h\geq 0$ and then, using Equations \eqref{eq:sym}, 
we have
\begin{multline*}
D(0,\dy)=\\
{\pqscale(0)\over\pqscale(a)}\BRA{\pqscale(a-y)-q\int_0^{-y}\pqscale(a-y-z)\pscale(z)\mathrm{d}z-\pscale(-y)\cH^\PAR{p,q}(a)}
\dy.
\end{multline*}

Consequently, we have
\begin{multline*}
D(x,\dy)-{B(x;a)\over \pqscale(0)}D(0,\mathrm{d}y)=\\
\BRA{\pscale(-y)\dE_x\SBRA{ \mathrm{e}^{-p\tau_a^-}\cH^\PAR{p,q}(X_{\tau_a^-})}
-B(x-y;a-y)
+q\int_0^{-y}B(x-y-z;a-y-z)\pscale(z)\mathrm{d}z}
\dy.
\end{multline*}
and then, from Equation \eqref{eq:vsimple} satisfied by $v(x,\dy)$, we deduce that
\begin{multline*}
v(x,\mathrm{d}y )=\PAR{v(0,\dy)+p\pscale(-y)\dy}\dE_x\SBRA{ \mathrm{e}^{-p\tau_a^-}\cH^\PAR{p,q}(X_{\tau_a^-})} \\
 -\BRA{B(x-y;a-y)
-q\int_0^{-y}B(x-y-z;a-y-z)\pscale(z)\mathrm{d}z}\dy,
\end{multline*}
Taking $x=a$ in \eqref{eq:EH}, we note that $\dE_a\SBRA{ \mathrm{e}^{-p\tau_a^-}\cH^\PAR{p,q}(X_{\tau_a^-})} =\cH^\PAR{p,q}(a)-\Delta$.
Then, using the expression of the solution of the linear system, the expression of $D(.,\dy)$ and Equation \eqref{eq:Ba}, we have
\begin{multline*}
v(0,\dy)+\pscale(-y)\dy=\Delta^{-1}\PAR{\cH^\PAR{p,q}(a)D(0,\mathrm{d}y)+D(a,\mathrm{d}y)+\Delta\pscale(-y)\dy}\\
=\Delta^{-1}\pscale(0)\BRA{\cH^\PAR{p,q}(a-y)-q\int_0^{-y}\cH^\PAR{p,q}(a-y-z)\pscale(z)\mathrm{d}z}\dy.
\end{multline*}
Introducing this value in the last expression of $v(x,\dy)$ and using \eqref{eq:EH}, we have
\begin{multline*}
v(x,\mathrm{d}y )=\left\{
 -\PAR{\pscale(x-a)\cH^\PAR{p,q}(a-y)+B(x-y;a-y)}\right.
\\ +q\int_0^{-y}\PAR{\pscale(x-a)\cH^\PAR{p,q}(a-y-z)+B(x-y-z;a-y-z)}\pscale(z)\mathrm{d}z  
\\
+\Delta^{-1}\pscale(0)\PAR{\cH^\PAR{p,q}(a-y)-q\int_0^{-y}\cH^\PAR{p,q}(a-y-z)\pscale(z)\mathrm{d}z}\\
\left.\times \PAR{\mathcal{H}^\PAR{p,q}(x) - q \int_a^x \pscale(x-z) \mathcal{H}^\PAR{p,q}(z) \mathrm{d}z }
\right\}\dy.
\end{multline*}
From \eqref{eq:B},
\[
\pscale(x-a)\cH^\PAR{p,q}(a)+B(x;a)=
\pscale(x)+q\int_{0}^{a} \pscale(x-u)\pqscale(u)\mathrm{d}u,
\]
then we finally deduce, for $y<0$,
\begin{multline*}
v(x,\mathrm{d}y )=\left\{-\pscale(x-y)-q\int_{0}^{a-y} \pscale(x-y-u)\pqscale(u)\mathrm{d}u\right.\\
+q\int_0^{-y}\PAR{\pscale(x-y-z)+q\int_{0}^{a-y-z} \pscale(x-y-z-u)\pqscale(u)\mathrm{d}u}\pscale(z)\mathrm{d}z\\
+\Delta^{-1}\pscale(0)\PAR{\cH^\PAR{p,q}(a-y)
-q\int_0^{-y}\pscale(z)\cH^\PAR{p,q}(a-y-z)\mathrm{d}z} \\
\left.\times\left( \mathcal{H}^\PAR{p,q}(x) - q \int_a^x \pscale(x-z) \mathcal{H}^\PAR{p,q}(z) \mathrm{d}z \right)\right\}\dy.
\end{multline*}

Introducing the value of $\Delta$ given by \eqref{eq:delta} and noting that the function $\mathcal{W}^\PAR{p,q}$ satisfies the relations:
$$
\mathcal{W}^{(p,q)}_{x-a} (x-y) = \mathcal{W}^{(p+q,-q)}_{a-y} (x-y) ,
$$
and, if $y>a$ then it is also equal to $\pscale(x-y)$, Theorem \ref{main} is proved for any $y\in\dR$ when $X$ is a process of BV.

\subsection{Proof of Theorem~\ref{main} when $X$ is of UBV}
 
We follow the argument of \cite{loeffenetal2014} to extend the result to SNLP with paths of unbounded variation (UBV).

The proof uses an approximation argument for which we need to introduce a sequence $\PAR{X^n}_{n\geq 1}$ of spectrally negative Lévy processes of bounded variation. To this end, suppose $X$ is a spectrally negative Lévy process having paths of unbounded variation, with Lévy triplet $(\gamma,\sigma,\Pi)$. Set, for each $n\geq 1$, the spectrally negative Lévy process $X^n= \BRA{X^n_t,t\geq 0}$ with Lévy triplet  $(\gamma,0,\Pi^n)$, where
\[
\Pi^n(\mathrm{d}\theta):=\ind_\BRA{\theta\geq 1/n}\Pi(\mathrm{d}\theta)+\sigma^2n^2\delta_{1/n}(\mathrm{d}\theta),
\]
with $\delta_{1/n}(\mathrm{d}\theta)$ standing for the Dirac point mass at $1/n$. Note that $X^n$ has paths of bounded variation with drift $\mathtt{c}^n:=\gamma+\int_{1/n}^1\theta\Pi(\mathrm{d}\theta)+\sigma^2n^2$, which means that $\mathtt{c}^n$ may be negative for small $n$. Note that $X^n$ is a \textit{true} spectrally negative Lévy process when $n$ is large enough $n$. By Bertoin \cite[p.210]{bertoin1996} , $X^n$ converges almost surely to $X$ uniformly on compact time intervals. We denote by $\pscale_n$ the $p-$scale function corresponding to the spectrally negative Lévy process $X^n$. We also introduce $\psi_n$ the Laplace exponent and $\Phi_n$ its right-inverse of $X^n$, with the convention $\inf\emptyset=\infty$.

From the result for bounded variation processes, we have for $y\geq 0$
\begin{multline}\label{eq:vn}
 \int_0^{\infty}\mathrm{e}^{-pt}\e_x \left[ \mathrm{e}^{- q \int_0^{t} \ind_{(0,a)} (X^n_s) \mathrm{d}s } ; X^n_{t} \in \mathrm{d}y \right]\mathrm{d}t=\\
\hskip -4cm  \mathrm{e}^{-\Phi_n(p)a}
 \left( \frac{\mathcal{H}_n^\PAR{p,q}(x) - q \int_a^x \pscale_n(x-z) \mathcal{H}_n^\PAR{p,q}(z) \mathrm{d}z}{\psi'_n(\Phi_n(p)) + q \int_0^a \mathrm{e}^{-\Phi_n(p) z} \mathcal{H}_n^\PAR{p,q}(z) \mathrm{d}z} \right)\\
 \times \left\lbrace \cH_n^\PAR{p,q}(a-y) - q \int_0^{-y} \cH_n^\PAR{p,q}(a-y-z) \pscale_n(z) \mathrm{d}z \right\rbrace \dy \\
- \left\{ \mathcal W_{n,x-a}^{(p,q)} (x-y) -q \int_0^{-y} \mathcal W_{n,x-a}^{(p,q)} (x-y-z) \pscale_n(z) \mathrm{d}z \right\} \dy .
\end{multline}
with  $
\displaystyle{\mathcal{W}_{n,a}^{(p,q)} (x) :=  W_n^{(p+q)}(x) -q \int_0^a W_n^{(p+q)}(x-z) \pscale_n(z) \mathrm{d}z }$ and
\[
\mathcal{H}_n^\PAR{p,q}(x) := \mathrm{e}^{\Phi_n(p) x} \left[ 1 + q \int_0^x \mathrm{e}^{-\Phi_n(p)z} W_n^{(p+q)}(z) \mathrm{d}z \right].
\]
Our aim in to take $n\rightarrow \infty$ in both sides of \eqref{eq:vn}.

By Bertoin \cite[p.210]{bertoin1996} , $X^n$ converges almost surely to $X$ uniformly on compact time intervals, i.e. for all $t>0$ $\lim_{n\rightarrow \infty}\sup_{s\in[0,t]}|X_s^n-X_s|=0$ $\dP_x-$a.s. 
Therefore $ \mathrm{e}^{- q \int_0^{t} \ind_{(0,a)} (X^n_s) \mathrm{d}s } \ind_{ X^n_{t} \in A} $ converges to  $ \mathrm{e}^{- q \int_0^{t} \ind_{(0,a)} (X_s) \mathrm{d}s } \ind_{ X_{t} \in A} $ a.s., for any Borel set $A$. Since $ \mathrm{e}^{- q \int_0^{t} \ind_{(0,a)} (X^n_s) \mathrm{d}s } \ind_{ X^n_{t} \in A} $ is dominated by $1$, it is easy to show that
the left hand side of  \eqref{eq:vn} converges to the desired expression.

\medskip
Let us now study the convergence of the right hand side of \eqref{eq:vn} when $n\rightarrow\infty$. The Laplace exponent $\psi_n$ of $X^n$ converges to the Laplace exponent $\psi$ of $X$, which implies $\Phi_n$ and $\psi'_n$ converge to $\Phi$ and $\psi'$ respectively.
It also means  via \eqref{def_scale} that the Laplace transform of $\pscale_n$ converges to the Laplace transform
of $\pscale$, and thanks to the continuity theorem of Laplace transforms, $\pscale_n(x)\rightarrow \pscale(x)$ for all $x\in\dR$ and $p\geq 0$. At last, since all the functions involved are continuous and since we consider compact sets, using the dominated convergence theorem on the definition of $\mathcal{W}_{n,a}^{(p,q)}$ and $\mathcal{H}_n^\PAR{p,q}$, we deduce the convergence of $\mathcal{W}_{n,a}^{(p,q)}$ and $\mathcal{H}_n^\PAR{p,q}$  to  $\mathcal{W}_{a}^{(p,q)}$ and $\mathcal{H}^\PAR{p,q}$ respectively when $n\rightarrow\infty$.

\end{appendices}

%
%
\bibliographystyle{abbrv}
\bibliography{occupation}

\end{document}